\documentclass[11pt,letterpaper]{article}
\usepackage[margin=1.5in]{geometry}
\usepackage{amsmath,times}
\usepackage{amssymb,amsmath}
\usepackage{graphicx}
\usepackage{amsthm}
\usepackage{multicol}
\usepackage[all]{xy}
\usepackage{cancel}
\usepackage{tikz}
\usepackage[utf8]{inputenc}
\usepackage{url}
\usepackage{authblk}
\usepackage{booktabs}
\usepackage{xcolor}
\usepackage[dvipsnames]{xcolor}
\usepackage{tcolorbox}
\tcbuselibrary{theorems}
\usetikzlibrary{tikzmark,positioning,cd,arrows.meta}
\usepackage{graphicx}
\usepackage{multicol}
\usepackage{tikz}
\usepackage{doi}
\usepackage{tikz-cd}
\usepackage{orcidlink}
\usepackage{url}
\usepackage{authblk}
\usepackage{booktabs}
\usepackage{xcolor}
\usepackage[dvipsnames]{xcolor}
\usepackage{tcolorbox}
\tcbuselibrary{theorems}
\usetikzlibrary{tikzmark,positioning,cd,arrows.meta}

\title{Finite generating sets for monoids of $G$-equivariant functions}

\author[1]{Ram\'on H. Ruiz-Medina \orcidlink{0000-0003-2916-9160}}
\author[2]{Victor M. Lara-Gómez \orcidlink{0000-0000-0000-0000}}
\author[3]{Gerardo Romero-Rosales \orcidlink{0000-0000-0000-0000}}

\affil[1]{Center for Mathematical Sciences.
National Autonomous University of Mexico.
Morelia, Michoac\'an, Mexico. CP 58089 \\ \texttt{harath.ruiz@academicos.udg.mx}}

\affil[2]{University Center of Exact Sciences and Engineering, University of Guadalajara, Guadalajara, Jalisco, Mexico \\ \texttt{victor.lara8714@alumnos.udg.mx}}
\affil[3]{University Center of Exact Sciences and Engineering, University of Guadalajara, Guadalajara, Jalisco, Mexico \\ \texttt{gerardo.romero2895@alumnos.udg.mx}}
\date{}

\newtheorem{theorem}{Theorem}[]
\newtheorem{lemma}[theorem]{Lemma}

\newtheorem{corollary}[theorem]{Corollary}
\newtheorem{proposition}[theorem]{Proposition}
\newtheorem{example}[theorem]{Example}
\newtheorem{definition}[theorem]{Definition}

\newtheorem{claim}[theorem]{Claim}

\newcommand{\zz}{\mathbb{Z}}

\newcommand{\rank}{\mathrm{Rank}}
\newcommand{\Sym}{\mathrm{Sym}}

\newcommand{\EndG}{\mathrm{End}_{G}(X)}
\newcommand{\AutG}{\mathrm{Aut}_{G}(X)}
\newcommand{\Hbox}{\mathcal{B}_{[H]}}
\newcommand{\Kbox}{\mathcal{B}_{[K]}}

\newcommand{\StabG}{\mathrm{Stab}_{G}(X)}
\newcommand{\ConjG}{\mathrm{Conj}(G)}
\newcommand{\ConjX}{\mathrm{Conj}_{G}(X)}

\newcommand{\EndH}{\mathrm{End}_{G}(\Hbox)}

\newcommand{\AutH}{\mathrm{Aut}_{G}(\Hbox)}

\newcommand{\ibox}{\mathcal{B}_{i}}

\begin{document}

\maketitle

\begin{abstract}
Given the action of a group $G$ on a set $X$, the set of all $G$-equivariant functions, i.e., those satisfying $f(g\cdot x)=g\cdot f(x)$ for all $g\in G$ and $x\in X$, forms a monoid under composition. In this work we study their generating sets. First, we propose bounds for the cardinalities of the generating sets of their group of units, denoted by $\AutG$. Subsequently, using so-called orbital infiltrations, certain transformations that turn out to be indispensable and provide relevant structural information about the monoid, we determine conditions on the group $G$, the set $X$, and the action that prevent the whole monoid $\EndG$ from admitting a finite generating set. \\

\textbf{Keywords:} $G$-equivariant function, group of units, generating set. \\

\textbf{MSC 2020:} 20B25, 20E22, 20M20, 20B07.
\end{abstract}

\section{Introduction}  

$G$-sets and their $G$-equivariant transformations constitute a natural bridge between group theory, algebraic topology, and transformation semigroups. The monoid $\EndG$, consisting of all $G$-equivariant functions from $X$ to itself, and its group of units $\AutG$, which comprises the equivariant bijections, are objects that have been studied from multiple structural perspectives: regularity, generation by idempotents, and embedding properties into wreath products, among others \cite{cita21,cita22,cita23,cita24,cita13,cita14}. Although the classical literature has laid solid foundations for free $G$-sets and independence algebras \cite{cita11,cita12,cita15}, questions concerning finite generation and rank (the minimum number of generators) of these structures have received a still fragmented treatment, leaving open the question of how these invariants behave under arbitrary actions and how they relate to each other.

The present work addresses two problems of equal relevance affecting, respectively, the group of units and the whole monoid. On the one hand, we aim to compute the rank of $\AutG$ in explicit terms of the action, unraveling how the number of orbits and the structure of stabilizers determine the generating complexity of the equivariant symmetry group. On the other hand, with equal weight, we investigate the conditions on the set $X$, the group $G$, and the action itself that force $\EndG$ *not* to be finitely generated. This second problem, although seemingly more global, does not reduce to the first: a finitely generated group of units does not guarantee finite generation of the whole monoid, and it is precisely this tension that we aim to elucidate.

As our main contributions, we offer a complete characterization of the rank of $\AutG$ and, simultaneously, establish necessary and sufficient conditions for $\EndG$ to be finitely generated or not. These conditions reveal a precise dichotomy: the presence of infinitely many orbits, infinite stabilizers, or certain torsion-free subgroups of $G$ forces non-finite generation, while their absence leads to finitely generated monoids. Our results extend and unify recent work on cellular automata and finite $G$-sets \cite{castillo2016a,castillo2016b,castillo2017,castillo2021,paper}, and complement contemporary developments on monoids of $G$-equivariant functions \cite{paper2,paper3,chap}, thus offering a comprehensive panorama that clearly distinguishes generatable scenarios from those that are not.

For any natural number $n\in \mathbb{N}$ we denote an $n$-element set by $[n]=\{1,2,3,...,n\}$. We denote the set of all conjugacy classes of subgroups of $ G $ by $ \ConjG $, and denote them as follows: $ [H]:=\{g^{-1}Hg:\ g\in G\} $. In the cases where there is a countable number of conjugacy classes, we denote by $ [H_{1}], [H_{2}],...,[H_{r}],... $ all the conjugacy classes of subgroups of $ G $, ordered by their cardinality as:
$$  |H_{1}| \leq |H_{2}| \leq \dots \leq |H_{r}|\leq \dots. $$ 
We also denote a finite set with $r$ elements by $[r]=\{1,2,...,r\}$. We can define a partial order on $\ConjG$ given by:
$$H \leq K \iff \exists g\in G\ \text{such that}\ H\leq g^{-1}Kg. $$

Given the action of a group $ G $ on a set $ X $, we recall the $ G $-orbits and the stabilizer of elements in $ X $ as follows: for $ x \in X $ :
$$  Gx := \{ g \cdot x \mid g \in G \},\  G_{x} := \{ g \in G \mid g \cdot x = x \}. $$ 
Subsequently, based on the stabilizer, we define the following sets in $ X $. Given $ H \leq G $, let:
$$  \mathcal{B}_{H} := \{ x \in X \mid G_{x} = H \}, $$ 
$$  \Hbox := \{ x \in X \mid [G_{x}] = [H] \}. $$ 
We can extend the partial order of conjugacy classes to these sets as 
$$\Hbox \leq \Kbox \iff [H]\leq [K].$$
Let $ X/G $ and $ \Hbox/ G $ denote the set of orbits of the action of $ G $ on $ X $ and $ \Hbox $ respectively. In the case that the conjugacy classes are indexed by a natural number, we simplify the notation of the sets $ \mathcal{B}_{[H_{i}]} $ as $ \mathcal{B}_{[H_{i}]}=\ibox $. When possible, the index or the subgroup will be omitted from the notation. 

Note that some conjugacy classes may not be included in the set of stabilizers of the action of $ G $ on $ X $; we then define the set of subgroups of $G$ that act as stabilizers for elements in $X$ as:
$$  \StabG:=\{G_{x}|\ x\in X\}. $$

However, we must note that if $ H \in \StabG $, the complete conjugacy class of $ H $, $ [H] $, is contained in $ \StabG $, because if $ h \in G_{x} $ for some $ x \in X $, it holds that 
$$  (g^{-1}hg) \cdot (g^{-1}\cdot x) = g^{-1}\cdot x, \ \forall g \in G, $$ 
meaning that any conjugate of $ h $ stabilizes at least one element in $ X $. We then denote by $\ConjX$ the set of conjugacy classes of subgroups of $G$ in $\StabG$,
$$\ConjX:=\{[H]\in \ConjG|\ H\in \StabG\}.$$

The following result is well known in the theory of $ G $-equivariant functions. 
\begin{lemma}\label{lema1}
Let $ G $ be a group acting on a set $ X $. Given $ x,y \in X $, the following holds: 

\begin{enumerate} 
\item[i)] There exists a $ G $-equivariant function $ \tau \in \EndG $ such that $ \tau(x)=y $ if and only if $ G_{x} \leq G_{y} $.
\item[ii)] There exists a bijective $ G $-equivariant function $ \sigma \in \AutG $ such that $ \sigma(x)=y $ if and only if $ G_{x} = G_{y} $.
\end{enumerate}
\end{lemma}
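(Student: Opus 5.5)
Both directions of each item come down to one observation: a $G$-equivariant map is determined on an orbit by its value at one point. Concretely, I will show that for $x,y$ with $G_x\le G_y$ the rule $g\cdot x\mapsto g\cdot y$ is a well-defined equivariant map $Gx\to Gy$. I will then extend it to all of $X$ using the orbit decomposition $X=\bigsqcup_{O\in X/G}O$.

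\textbf{Necessity.} If $\tau\in\EndG$ and $\tau(x)=y$, take $g\in G_x$. Equivariance gives
$$g\cdot y=g\cdot\tau(x)=\tau(g\cdot x)=\tau(x)=y,$$
so $G_x\le G_y$. For ii), let $\sigma\in\AutG$ with $\sigma(x)=y$. The inverse $\sigma^{-1}$ is also $G$-equivariant: apply $\sigma^{-1}$ to $\sigma(g\cdot\sigma^{-1}(z))=g\cdot z$. Since $\sigma^{-1}(y)=x$, the same argument gives $G_y\le G_x$, hence $G_x=G_y$.

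\textbf{Sufficiency for i).} Assume $G_x\le G_y$. Define $\tau$ on the orbit $Gx$ by $\tau(g\cdot x)=g\cdot y$. It is well defined: if $g\cdot x=h\cdot x$, then $h^{-1}g\in G_x\le G_y$, so $g\cdot y=h\cdot y$. It is equivariant on $Gx$ because $\tau(k\cdot(g\cdot x))=(kg)\cdot y=k\cdot\tau(g\cdot x)$. On every other orbit let $\tau$ be the identity. A map that is equivariant on each orbit separately is equivariant on $X$, since $g\cdot z$ always lies in the orbit of $z$. Thus $\tau\in\EndG$ and $\tau(x)=y$.

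\textbf{Sufficiency for ii).} This is the step that needs care, because defining $\tau$ as the identity off $Gx$ is not enough when $Gx\neq Gy$: the map would not be injective. Assume $G_x=G_y$. If $Gx=Gy$, the map $g\cdot x\mapsto g\cdot y$ built above is a self-map of the orbit. If $Gx\neq Gy$, define $\sigma$ on $Gx\cup Gy$ by $g\cdot x\mapsto g\cdot y$ and $g\cdot y\mapsto g\cdot x$, and let $\sigma$ be the identity on all remaining orbits. Both pieces are well defined by the computation above, applied once with $G_x\le G_y$ and once with $G_y\le G_x$. In either case the map $Gy\to Gx$ given by $g\cdot y\mapsto g\cdot x$ is a two-sided inverse of $g\cdot x\mapsto g\cdot y$, since the two compositions are $g\cdot x\mapsto g\cdot x$ and $g\cdot y\mapsto g\cdot y$. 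Hence $\sigma$ is an equivariant bijection of $X$ with $\sigma(x)=y$, that is, $\sigma\in\AutG$.
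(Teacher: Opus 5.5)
Your proof is correct. The paper gives no proof of its own, since it cites the lemma as well known, but your witnesses for sufficiency are exactly the maps $[x\mapsto y]$ and $(x\leftrightarrow y)$ that it defines immediately afterwards, and your necessity argument is the standard one. Your case split $Gx=Gy$ versus $Gx\neq Gy$ in ii) is the right care to take, because $(x\leftrightarrow y)$ as written is only consistent for distinct orbits, and once $G_x=G_y$ is assumed your explicit inverse gives bijectivity in the same-orbit case without the finiteness of $G$ that the paper's subsequent remark invokes.
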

\noindent Further details on this result can be found in \cite{paper}.\\

Based on these results, given $x,y\in X$ such that $G_{x}\leq G_{y}$, we define the following $G$-equivariant functions:
$$  [x\mapsto y](z)= \left\{  \begin{array}{cc}
    g\cdot y &  z=g\cdot x, \\
    z & \text{otherwise.}
\end{array} \right. $$ 
We call these functions elementary collapsing of type $(G_{x},[G_{y}]_{N_{G_{x}}})$. We also define the following $G$-equivariant and bijective functions:
$$  (x\leftrightarrow y)(z)= \left\{  \begin{array}{cc}
    g\cdot y &  z=g\cdot x, \\
    g\cdot x &  z=g\cdot y, \\
    z & \text{otherwise.}
\end{array} \right. $$ 

Note that if $Gx \neq Gy$, then $[x\mapsto y]$ is neither injective nor surjective, while, if $Gx=Gy$ and $G$ is a finite group (as will be considered in later cases), then these functions are bijective and will be denoted as $(x\mapsto y)$. In this case, there exists an element $g\in G$ such that $y=g\cdot x$; we want to define $G$-equivariant functions 
$$(x\rightarrow g\cdot x)(z):= \left\{  \begin{array}{cc}
    hg\cdot x &  z=h\cdot x, \\
    z & \text{otherwise.}
\end{array} \right.$$

It is not hard to verify that this function is well-defined if and only if $g\in N_{G}(G_{x})/G_{x}$, and that moreover
$$(x\rightarrow g\cdot x)^{-1}=(x\rightarrow g^{-1}\cdot x).$$

Given a subgroup $ H\leq G $ and a subset $ N\subseteq G$, we define the $ N $-conjugacy classes of $ H $ as:
$$  [H]_{N}:=\{n^{-1}Hn:\ n\in N\}. $$ 
It is easy to see that $ [H]_{N} \subseteq [H] $, meaning that the elements in an $ N $-conjugacy class of $ H $ are some of the conjugate subgroups of $ H $, specifically those given by conjugating elements in $ N $. We denote the normalizer of a subgroup $ H $ simply as $ N_{G}(H)=N_{H} $ and when $H\leq G$ is a numbered subgroup of $G$, i.e., $H_{i}$, we denote the normalizer of $H_{i}$ simply as $N_{G}(H_{i})= N_{i}$.\\

\begin{lemma} 
For a finite $G$-set $X$ and for any $H\in \StabG$, the following statements hold. 
\begin{enumerate} 
\item If $X$ is a finite $G$-set, then $\EndG$ is finite. 
\item The index $[G:H]$ is finite. 
\item The quotient $N_{G}(H)/H$ is finitely generated.
\end{enumerate}
\end{lemma}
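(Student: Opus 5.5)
The plan is to prove the three items in order, each from finiteness of $X$ together with the orbit--stabilizer correspondence; no deep input is needed.

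For item 1, every element of $\EndG$ is in particular a function $X\to X$, so $\EndG$ is a subset of the full transformation monoid $X^{X}$. That monoid has $|X|^{|X|}<\infty$ elements, hence $\EndG$ is finite. Lemma~\ref{lema1} is not needed here.

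For item 2, I take $H\in \StabG$, so $H=G_{x}$ for some $x\in X$. The map $gH\mapsto g\cdot x$ is a well-defined bijection between the left cosets $G/G_{x}$ and the orbit $Gx$. This is the standard orbit--stabilizer argument: $g\cdot x=g'\cdot x$ if and only if $g^{-1}g'\in G_{x}$. Since $Gx\subseteq X$ is finite, $[G:H]=|Gx|\le |X|$ is finite.

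For item 3, I want to show that $N_{G}(H)/H$ is in fact finite, which is stronger than finitely generated. Since $H\le N_{G}(H)\le G$, the cosets of $H$ in $N_{G}(H)$ form a subset of the cosets of $H$ in $G$. Hence $|N_{G}(H)/H|=[N_{G}(H):H]\le [G:H]<\infty$ by item 2. A finite group is generated by its finitely many elements, so the quotient is finitely generated.

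An alternative route to item 3 identifies $N_{G}(H)/H$ with $\AutG$ restricted to the orbit $Gx$. Each coset $nH$ gives the well-defined map $(x\rightarrow n\cdot x)$ introduced above, and by item 1 there are only finitely many such maps. Either route works, and I will present the index argument since it is shorter.

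The only subtle point is making sure the orbit--stabilizer bijection is well defined, and I expect that to be the sole place requiring care. Nothing in the argument requires $G$ itself to be finite, which is consistent with the hypotheses of the lemma.
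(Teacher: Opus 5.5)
Your proof is correct: item 1 follows from $\EndG\subseteq X^{X}$, item 2 is the orbit--stabilizer bijection giving $[G:H]=|Gx|\le|X|$, and item 3 follows from $[N_G(H):H]\le[G:H]<\infty$. The paper states this lemma without any proof, treating it as standard, so there is no argument in the paper to compare against.

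Your route proves something stronger than item 3 asks: $N_G(H)/H$ is actually finite. This is exactly the form the paper relies on later, where it says ``for each $i\in[r]$, $N_G(H_i)/H_i$ is finite'' and picks a minimal generating set $S_i$.

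If you ever switch to your alternative route, you need one more line. Knowing that there are finitely many maps $(x\rightarrow n\cdot x)$ only bounds the number of cosets once you know that $nH\mapsto (x\rightarrow n\cdot x)$ is injective. Injectivity holds because evaluating at $x$ recovers $n\cdot x$, and $n\cdot x$ determines $nH$. Your main index argument needs no such supplement.
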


The following is an important result on the structure of the group of units of the monoid of $G$-equivariant functions presented in \cite{paper}.
\begin{theorem}
Given a group $G$ acting on a set $X$, it holds that
$$\AutG\cong \prod_{[H]\in \ConjG}{(N_{G}(H)/H) \wr Sym(G/\Hbox)}.$$
\end{theorem}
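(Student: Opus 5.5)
The plan is to decompose $X$ into the $G$-invariant pieces $\Hbox$, show that every equivariant bijection preserves each piece, and then identify the automorphism group of a single piece with a wreath product. I read $Sym(G/\Hbox)$ as $\Sym(\Hbox/G)$, the symmetric group on the set of orbits contained in $\Hbox$. Each $\Hbox$ is a union of $G$-orbits, since $G_{g\cdot x}=gG_xg^{-1}$. By Lemma~\ref{lema1}(ii), or directly from $G_{\sigma(x)}\supseteq G_x$ and $G_{\sigma^{-1}(\sigma(x))}\supseteq G_{\sigma(x)}$, any $\sigma\in\AutG$ satisfies $G_{\sigma(x)}=G_x$. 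Hence $\sigma(\Hbox)=\Hbox$ for every class $[H]$, and restriction gives an isomorphism
$$\AutG\cong\prod_{[H]\in \ConjX}\AutH.$$
Classes in $\ConjG\setminus\ConjX$ contribute trivial factors, since $\Hbox=\emptyset$, so the product may be taken over all of $\ConjG$. It is an unrestricted product, because bijections of disjoint invariant pieces glue freely.

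Next fix $[H]$ and choose a transversal: in each orbit $O\in\Hbox/G$ pick $x_O\in O$ with $G_{x_O}=H$ exactly. This is possible because $[G_x]=[H]$ means $G_x=g^{-1}Hg$, and then $G_{g\cdot x}=H$. Every orbit $O$ is then identified with $G/H$ via $g\cdot x_O\mapsto gH$. Given $\sigma\in\AutH$, it permutes the orbits; call this permutation $\pi_\sigma\in\Sym(\Hbox/G)$. It also determines elements $n_O\in G$ by $\sigma(x_O)=n_O\cdot x_{\pi_\sigma(O)}$. Comparing stabilizers gives $H=G_{\sigma(x_O)}=n_OHn_O^{-1}$, so $n_O\in N_G(H)$. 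Moreover $n_O$ is determined exactly up to right multiplication by $H$, giving a well-defined element $n_OH\in N_G(H)/H$. Conversely, any choice of permutation $\pi$ and cosets $n_OH$ defines an equivariant bijection by $g\cdot x_O\mapsto gn_O\cdot x_{\pi(O)}$. This map is well defined exactly because $n_O$ normalizes $H$, which is the same check as for the maps $(x\rightarrow g\cdot x)$ defined before the statement.

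This yields a bijection $\Phi:\AutH\to (N_G(H)/H)^{\Hbox/G}\times\Sym(\Hbox/G)$. The main step, and the only real obstacle, is computing the composition law. For $\sigma,\tau$ one gets
$$\sigma\tau(x_O)=\sigma(m_O\cdot x_{\pi_\tau(O)})=m_O n_{\pi_\tau(O)}\cdot x_{\pi_\sigma\pi_\tau(O)},$$
where $m$ and $n$ are the coset functions of $\tau$ and $\sigma$. This is precisely the multiplication rule of the unrestricted wreath product $(N_G(H)/H)\wr\Sym(\Hbox/G)$, in which the base group is the full product of copies of $N_G(H)/H$ indexed by orbits. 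The care needed is in fixing left/right conventions so that $\pi$ acts on the index set consistently. If needed, one passes to $N_G(H)/H$ with the opposite multiplication, which is isomorphic via inversion, so that the resulting map is a homomorphism rather than an anti-homomorphism.

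Combining the per-class isomorphisms with the product decomposition gives the theorem.
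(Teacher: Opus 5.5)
Your proof is correct. The paper does not prove this theorem itself; it is imported from \cite{paper}, so there is no internal argument to compare against. Your route is the standard one, and it uses the same ingredients the paper relies on elsewhere: invariance of the boxes $\Hbox$ under $\AutG$, the box-wise splitting $\tau=\tau_1\cdots\tau_r$, the orbit-permutation map $\pi_{[H]}$ of Corollary~\ref{cor:cociente-sym}, and the maps $(x\rightarrow g\cdot x)$ with $g\in N_G(G_x)$ as base-group coordinates.

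Two features of your argument make it valid for arbitrary $G$ and $X$, as the statement requires. First, both the product over $\ConjG$ and the wreath product are taken unrestricted. Second, you get $n_O\in N_G(H)$ from the equality $G_{\sigma(x)}=G_x$, which uses invertibility of $\sigma$. Equivariance alone only gives $H\subseteq n_OHn_O^{-1}$, and for infinite $G$ that inclusion does not force normalization.

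Two small points:
\begin{itemize}
\item You assert, but do not check, that the map built from $(\pi,(n_OH))$ is bijective. The quickest check is your own composition formula: the data $(\pi^{-1},(n_{\pi^{-1}(O)}^{-1}H))$ gives a two-sided inverse.
\item The left/right convention issue disappears if you record $f_\sigma(O):=n_O^{-1}H$ instead of $n_OH$. Your formula then reads $f_{\sigma\tau}(O)=f_\sigma(\pi_\tau(O))\,f_\tau(O)$. This is exactly the multiplication in $(N_G(H)/H)^{\Hbox/G}\rtimes\Sym(\Hbox/G)$ with $\Sym(\Hbox/G)$ acting by precomposition, so no passage to the opposite group is needed.
\end{itemize}
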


For a $G$-equivariant function $\tau\in \EndH$, we define and denote its extension to $\EndG$ as
$$\widehat{\tau}(z):=\left\{\begin{array}{cc}\tau(z)& z\in \Hbox,\\ z& \text{otherwise.} \end{array} \right.$$

\begin{corollary}\label{cor:cociente-sym}
Given a group $G$ acting on a set $X$, it holds that the group 
$$\prod_{[H]\in \ConjX}{\Sym\left(G/\Hbox\right)}$$
is a quotient of $\AutG$.
\end{corollary}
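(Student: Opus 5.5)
We derive this from the structure theorem stated just above, $\AutG\cong \prod_{[H]\in \ConjG}(N_{G}(H)/H)\wr \Sym(\Hbox/G)$. The index set can be restricted to $\ConjX$: when $[H]\notin\ConjX$ the box $\Hbox$ is empty, so that factor is trivial. (The notation $G/\Hbox$ in the statement we read as the set of orbits $\Hbox/G$.) It therefore suffices to produce, for each $[H]\in\ConjX$, a surjective homomorphism
$$\pi_{[H]}\colon (N_G(H)/H)\wr \Sym(\Hbox/G)\longrightarrow \Sym(\Hbox/G).$$
The product map $\prod_{[H]}\pi_{[H]}$ is then a surjective homomorphism from $\AutG$ onto $\prod_{[H]\in\ConjX}\Sym(\Hbox/G)$, and the statement follows from the first isomorphism theorem.

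For a single factor we use the standard projection of an unrestricted wreath product $A\wr \Sym(\Omega)=A^{\Omega}\rtimes \Sym(\Omega)$ onto its top group, $(f,\sigma)\mapsto \sigma$, with $A=N_G(H)/H$ and $\Omega=\Hbox/G$. Because the base group $A^{\Omega}$ is normal and the multiplication is $(f,\sigma)(f',\sigma')=(f\cdot(\sigma f'),\sigma\sigma')$, this projection is a homomorphism. It is onto, since $(1,\sigma)\mapsto\sigma$ is a section, and its kernel is $A^{\Omega}$. We then take products over $[H]\in\ConjX$. For an arbitrary (possibly infinite) product of surjective homomorphisms, the map defined componentwise is a homomorphism. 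It is also surjective, because each coordinate can be lifted independently: given $(\sigma_{[H]})_{[H]}$, take $(1,\sigma_{[H]})_{[H]}$. This works as long as the product in the structure theorem is the full (unrestricted) direct product, which is how we read it.

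A more intrinsic description avoids depending on the exact form of the isomorphism. Any $\phi\in\AutG$ preserves stabilizers up to equality by Lemma~\ref{lema1}, so it maps each box $\Hbox$ bijectively onto itself. Being equivariant, $\phi$ also sends orbits to orbits. The assignment $\phi\mapsto(Gx\mapsto G\phi(x))$ on $\Hbox/G$ is then well defined, and it is a homomorphism $\AutG\to\prod_{[H]}\Sym(\Hbox/G)$. For surjectivity, fix for each orbit $O$ in $\Hbox/G$ a representative $x_O$ whose stabilizer equals a chosen $H\in[H]$; this is possible by conjugating within the orbit. Given permutations $\sigma_{[H]}$, define $\phi(g\cdot x_O)=g\cdot x_{\sigma(O)}$. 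This is well defined because $G_{x_O}=G_{x_{\sigma(O)}}=H$, and it is bijective and equivariant, essentially by the construction behind the maps $(x\leftrightarrow y)$.

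The main obstacle is the bookkeeping of choosing representatives with identical (not merely conjugate) stabilizers, which is what makes the lift well defined. The infinite-product case is handled by choosing representatives in every box simultaneously, using the axiom of choice.
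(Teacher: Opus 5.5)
Your proposal is correct. Your second, \emph{intrinsic} argument is essentially the paper's proof. The paper defines $\Pi(\sigma)=(\pi_{[H]}(\sigma))_{[H]\in\ConjX}$ with $\pi_{[H]}(\sigma)(Gx)=G\sigma(x)$. It proves surjectivity by lifting each orbit permutation through $g\cdot x_j\mapsto g\cdot x_{\sigma_{[H]}(j)}$ on each box and taking the pointwise product of these extensions.

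You are more careful than the paper at two points it leaves implicit:
\begin{itemize}
\item Every $\phi\in\AutG$ maps each $\Hbox$ onto itself, because $G_{\phi(x)}=G_x$.
\item The orbit representatives must all be chosen with one and the same stabilizer $H$. This is exactly what makes $g\cdot x_O\mapsto g\cdot x_{\sigma(O)}$ well defined and bijective. The paper only says ``choosing, for each orbit, a representative'', which does not suffice when stabilizers are merely conjugate.
\end{itemize}

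Your first route is a genuinely different and shorter argument: compose the structure-theorem isomorphism with the projections of the wreath products onto their top groups. Any abstract isomorphism suffices here, since being a quotient is invariant under isomorphism. It is the same mechanism the paper itself uses later for its lower bound on $\rank(\AutG)$. Its cost is that it rests on the structure theorem, which the paper quotes without proof. With infinitely many boxes or orbits, it also requires reading both the product and the wreath products as unrestricted. The direct argument is self-contained and avoids both dependencies.
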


\begin{proof}
To prove this assertion it is enough to find a surjective homomorphism from $\AutG$ to this product. We define a homomorphism 
$$\Pi:\AutG \rightarrow \prod_{[H]\in \ConjX}{\Sym\left(G/\Hbox\right)}$$ 
such that 
$$\Pi(\sigma)=\left(\pi_{[H]}(\sigma)\right)_{[H]\in \ConjX},$$
where $\pi_{[H]}(\sigma): G/\Hbox \rightarrow G/\Hbox$ is defined as
$$\pi_{[H]}(\sigma)(Gx):=G\sigma(x).$$

This map is well-defined, since if $Gx=Gy$, then there exists $g\in G$ such that $y=g\cdot x$. By the $G$-equivariance of $\sigma$, we have $\sigma(y)=\sigma(g\cdot x)=g\cdot\sigma(x)$, hence $G\sigma(y)=G\sigma(x)$. Thus, the image of the orbit does not depend on the chosen representative. Moreover, $\sigma$ is bijective, so $\pi_{[H]}(\sigma)$ is a permutation of the set of orbits. It is immediate that $\pi_{[H]}$ is a group homomorphism, since $\pi_{[H]}(\sigma\circ\tau)=\pi_{[H]}(\sigma)\circ\pi_{[H]}(\tau)$. Therefore, $\Pi$ is a homomorphism.

It remains to prove that $\Pi$ is surjective. Let $(\sigma_{[H]})_{[H]\in \ConjX}$ be an arbitrary element of the product. For each $[H]\in \ConjX$, the permutation $\sigma_{[H]}\in \Sym(G/\Hbox)$ induces a bijection on the set of orbits of $\Hbox$. This bijection can be extended to a $G$-equivariant function $\widetilde{\sigma_{[H]}}:\Hbox\to\Hbox$ by choosing, for each orbit, a representative and defining $\widetilde{\sigma_{[H]}}(g\cdot x_j):=g\cdot x_{\sigma_{[H]}(j)}$ (where $\sigma_{[H]}(j)$ is the index of the image orbit). This function is bijective and $G$-equivariant, hence $\widetilde{\sigma_{[H]}}\in \AutH$. Therefore, its extension $\widehat{\widetilde{\sigma_{[H]}}}\in \AutG$ is well-defined.

Now, the extensions $\widehat{\widetilde{\sigma_{[H]}}}$ act on disjoint boxes and hence commute. We define
$$\sigma := \prod_{[H]\in \ConjX} \widehat{\widetilde{\sigma_{[H]}}}.$$
This composition is well-defined pointwise: given any $x\in X$, there is a unique class $[H]\in\ConjX$ such that $x\in\Hbox$; for all $[K]\neq[H]$, the extension $\widehat{\widetilde{\sigma_{[K]}}}$ acts as the identity on $x$. Therefore, $\sigma(x)$ is determined by the unique factor acting on $x$. Clearly $\sigma\in\AutG$, since on each box $\Hbox$ it acts as the $G$-equivariant bijection $\widetilde{\sigma_{[H]}}$, and outside it as the identity.
Finally, for each $[H]\in\ConjX$, we have $\pi_{[H]}(\sigma)=\sigma_{[H]}$, since $\pi_{[H]}(\sigma)(Gx) = G\sigma(x) = \sigma_{[H]}(Gx)$ for all $x\in\Hbox$. Hence $\Pi(\sigma)=(\sigma_{[H]})_{[H]}$, proving that $\Pi$ is surjective.
\end{proof}

\section{Generating sets for the group of units $\AutG$}
In this section we find bounds for the rank of the group of units of the monoid $\EndG$, namely $\AutG$. We achieve these bounds by emphasizing special features of $G$-equivariant functions and constructing a generating set for said group. \\

We introduce and describe the nomenclature we will use to state the main theorem of this section. 
\[ \kappa_G(X) =  \vert \{ [H]\in \ConjX :|G/\Hbox|=1 \} \vert, \]
\[ \omega_G(X) =  \vert \{ [H]\in \ConjX :N_{G}(H)/H \cong1 \} \vert, \] 
$$\delta_{G}(X)=\left\{\begin{array}{cc} 0 & \omega_{G}(X)=\sum_{[H]\in \ConjX}{rank(N_{G}(H)/H)},\\ 1 & \text{otherwise.} \end{array} \right. $$
$$\Lambda_{G}(X)=\left\{\begin{array}{cc} 
1& |\ConjX|=\kappa_{G}(X)\ \text{and}\ N_{G}(H)/H\cong 1,\forall [H]\in \ConjX,\\
1 & \exists[H]\in \ConjX,\ |G/\Hbox|\geq 3,\\ 
0& \text{otherwise.} \end{array} \right. $$

We have that $\kappa_{G}(X)$ represents the number of boxes with a single orbit, $\omega_{G}(X)$ represents the number of boxes for which the corresponding quotient $N_{G}(H)/H$ is trivial, $\delta_{G}(X)$ equals one when $\omega_{G}(X)$ is equal to the sum of the ranks of all possible quotients, and $\Lambda_{G}(X)$ takes the value one for the case where $\AutG$ is a trivial group or there is at least one box with at least three orbits. In this work, following the standard convention in semigroup theory and finite generating sets, we consider the rank of the trivial group to be equal to 1.\\

We now state the main result of this section.

\begin{theorem}
Let $X$ be a finite $G$-set. Then it holds that 
\[
|\ConjX|-\kappa_{G}(X)+\Lambda_{G}(X) \] \[ \leq \rank(\AutG) \leq \] \[
|\ConjX|-\kappa_{G}(X)+\Lambda_{G}(X)-\omega_{G}(X)-\delta_{G}(X)+\sum_{[H]\in\ConjX}{\rank(N_{G}(H)/H)}.
\]
\end{theorem}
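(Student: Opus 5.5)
The plan is to prove the two inequalities separately. For the lower bound we pass to the quotient of Corollary~\ref{cor:cociente-sym}. For the upper bound we build an explicit generating set, using the wreath product decomposition $\AutG\cong \prod_{[H]}(N_{G}(H)/H)\wr \Sym(G/\Hbox)$. Throughout, write $n_{[H]}=|G/\Hbox|$, which is finite because $X$ is finite.

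\textbf{Lower bound.} Since $\rank$ cannot increase under surjective homomorphisms, Corollary~\ref{cor:cociente-sym} gives $\rank(\AutG)\geq \rank\bigl(\prod_{[H]\in\ConjX}\Sym(n_{[H]})\bigr)$. The factors with $n_{[H]}=1$ are trivial, and there are exactly $k=|\ConjX|-\kappa_{G}(X)$ nontrivial factors.
\begin{itemize}
\item Abelianizing, each nontrivial factor contributes a $\zz/2$ (the sign map). So the product surjects onto $(\zz/2)^{k}$, whose rank is $k$. This gives the term $|\ConjX|-\kappa_{G}(X)$.
\item For the extra $\Lambda_{G}(X)$, two cases arise. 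If $\AutG$ is trivial, we have $k=0$ and the convention $\rank(1)=1$ supplies the $+1$.
\item If some $n_{[H]}\geq 3$, we must show that $k$ generators cannot suffice. Here I would pass to a further quotient, such as $\Sym(n_{[H]})$ composed with the sign maps of the other factors. Then I would argue that a $k$-element generating set must map to an $\mathbb{F}_2$-basis of $(\zz/2)^{k}$, and that this forces too little freedom in the $\Sym(n_{[H]})$-coordinate to generate $\Sym(n_{[H]})$.
\end{itemize}
I expect this last point to be the main obstacle and the most delicate step. The difficulty is that products such as $\Sym(3)\times\Sym(2)$ are $2$-generated. So the counting must be checked carefully against the definition of $\Lambda_{G}(X)$, possibly by restricting attention to the relevant factor.

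\textbf{Upper bound.} We write down generators box by box, using the elementary bijections defined before the theorem. Fix $[H]\in\ConjX$ and orbit representatives $x_{1},\dots,x_{n_{[H]}}$ of $\Hbox$.
\begin{itemize}
\item The top group $\Sym(n_{[H]})$ is generated by the transposition $(x_{1}\leftrightarrow x_{2})$ and a cycle $c_{[H]}$ permuting the orbits cyclically. When $n_{[H]}=2$ one transposition suffices, and when $n_{[H]}=1$ nothing is needed.
\item The base group $(N_{G}(H)/H)^{n_{[H]}}$ is generated, modulo conjugation by the top group, by the maps $(x_{1}\to g\cdot x_{1})$ with $g$ running over a minimal generating set of $N_{G}(H)/H$. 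These maps are well defined because $g\in N_{G}(H)$. This costs $\rank(N_{G}(H)/H)$ generators, or $0$ when the quotient is trivial; that is why $\omega_{G}(X)$ is subtracted, since the convention $\rank(1)=1$ would otherwise overcount.
\item Next we save generators by merging elements supported on disjoint boxes. First, the cycles $c_{[H]}$ for all boxes with $n_{[H]}\geq 3$ are multiplied into one element, which accounts for the single $\Lambda_{G}(X)$. Second, one base-group generator is absorbed into some transposition $(x_{1}\leftrightarrow x_{2})$ whenever a nontrivial quotient exists, which accounts for $\delta_{G}(X)$.
\end{itemize}

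To justify each merge, we recover the individual components by taking suitable powers or commutators, in the spirit of the coprime-order trick. Where orders are not coprime, we instead conjugate by the box transpositions, which we already have separately. Finally, we verify that the wreath products generated in each box, together with the extensions $\widehat{\tau}$, give all of $\AutG$. Counting the generators then yields the stated upper bound.
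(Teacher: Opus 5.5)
The step you flag as the main obstacle in the lower bound cannot be carried out. Your own remark about $\Sym(3)\times\Sym(2)$ already shows why: the term $+\Lambda_G(X)$ fails in general once two boxes have at least two orbits. Concretely, let $G=S_3$ and $H=\langle(1\,2)\rangle$, which is self-normalizing. Let $X$ be the disjoint union of three copies of $G/H$ and two fixed points. Both quotients $N_G(H)/H$ and $N_G(G)/G$ are trivial, so $\AutG\cong S_3\times S_2$. This group is generated by $((1\,2\,3),\tau)$ and $((1\,2),e)$, where $\tau$ generates $S_2$: the first element has order $6$, its cube is $(e,\tau)$ and its fourth power is $((1\,2\,3),e)$. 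Hence $\rank(\AutG)=2$, whereas $|\ConjX|-\kappa_G(X)+\Lambda_G(X)=2-0+1=3$. The right-hand side also equals $3$, so the statement would force the rank to be $3$. The paper takes the same route as you (projection onto $\prod\Sym(G/\Hbox)$, then sign maps). Its extra generator comes from the second paragraph of the proof of Lemma~\ref{lem:rango-simetricos}, which is exactly this invalid step. What your abelianization argument does prove is $\rank(\AutG)\ge|\ConjX|-\kappa_G(X)$, together with $\rank(\AutG)\ge 2$ when some box has at least three orbits.

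The upper bound has a parallel gap. The merge meant to produce $-\delta_G(X)$, absorbing a base generator into a transposition $(x_1\leftrightarrow x_2)$, fails, and no other merge can rescue it because the inequality itself is false. If no box has two orbits, there is nothing to absorb into: for $G=C_2$ acting regularly on itself, $\AutG\cong C_2$ has rank $1$, but the right-hand side is $1-1+0-0-1+1=0$. If a transposition is available, the merge still fails: for $G=C_2$ acting on two copies of itself, $\AutG\cong C_2\wr S_2$ is dihedral of order $8$ and has rank $2$. The right-hand side is $1$, and your merged element $\eta s$ is a $4$-cycle on the four points of $X$, generating only a cyclic group of order $4$. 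Reading $\delta_G(X)$ by the verbal gloss after its definition does not help: three copies of $S_3/\langle(1\,2)\rangle$ then give a right-hand side $1<2=\rank(S_3)$. What your construction genuinely establishes, like the paper's generating sets $\{\eta^i_k,t_i,s_i\}$ and $\{\eta^i_k,\prod_k t_k,s_i\}$, is the bound without $\delta_G(X)$:
\[
\rank(\AutG)\le|\ConjX|-\kappa_G(X)+\Lambda_G(X)-\omega_G(X)+\sum_{[H]\in\ConjX}\rank(N_G(H)/H).
\]
The paper tries to save the extra generator in Theorem~\ref{rank} by merging $\eta^1_1$ into $\prod_k t_k$ rather than into $s_1$. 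Its proof, however, uses $s_1\eta^1_1s_1^{-1}\in B\setminus\{\eta^1_1\}$, which is false because this conjugate acts on the orbit of $x^1_2$. Its final count also does not match the size of the set it constructs.
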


The proof of this property follows from all the properties and constructions developed below in this section.\\

\begin{proposition}\label{prop:rango-corona}
For every group $H$ and every $n\in \mathbb{N}$, the canonical projection 
\[
H \wr S_n \longrightarrow S_n,
\qquad (h_1,\dots,h_n,\sigma)\mapsto \sigma
\]
is an epimorphism. In particular,
\[
\rank(S_n) \le \rank(H \wr S_n).
\]
\end{proposition}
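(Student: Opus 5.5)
We need to check two things: that the projection is a homomorphism and that it is surjective. The rank inequality then follows from the general fact that a homomorphic image of a group needs no more generators than the group itself.

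We use the standard model of the wreath product $H\wr S_n$ as the semidirect product $H^n\rtimes S_n$. Here $S_n$ acts on $H^n$ by permuting coordinates, $\sigma\cdot(h_1,\dots,h_n)=(h_{\sigma^{-1}(1)},\dots,h_{\sigma^{-1}(n)})$, and the multiplication is
\[
(h,\sigma)(k,\tau)=(h\,(\sigma\cdot k),\ \sigma\tau).
\]

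\textbf{Homomorphism.} The second coordinate of a product is just $\sigma\tau$. Hence $\pi(h,\sigma)=\sigma$ is multiplicative. Its kernel is the base group $H^n$, which is normal, as expected for a semidirect product.

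\textbf{Surjectivity.} For any $\sigma\in S_n$, the element $(1,\dots,1,\sigma)$ lies in $H\wr S_n$ and maps to $\sigma$. So $\pi$ is an epimorphism. In fact $\sigma\mapsto(1,\dots,1,\sigma)$ is a section of $\pi$.

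\textbf{Rank inequality.} Suppose $A$ generates $H\wr S_n$ with $|A|=\rank(H\wr S_n)$. Since $\pi$ is surjective, $\pi(A)$ generates $S_n$: every $\sigma$ equals $\pi(w)$ for some word $w$ in $A^{\pm1}$, and $\pi(w)$ is the corresponding word in $\pi(A)^{\pm1}$. Therefore $\rank(S_n)\le|\pi(A)|\le|A|$.

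The only point needing care is the convention that the trivial group has rank $1$, which matters in the degenerate case $n=1$ with $H$ trivial. The inequality still holds: if $\pi(A)$ is empty or reduces to $\{1\}$, we still have $\rank(S_n)=1\le|A|$, because $|A|\ge1$ under the same convention. Beyond this there is no real obstacle; the statement is essentially immediate from the semidirect product structure.
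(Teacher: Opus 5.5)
Your proposal is correct and follows the same route as the paper: the projection is surjective, so the image of a generating set of $H\wr S_n$ generates $S_n$, and hence $\rank(S_n)\le\rank(H\wr S_n)$. You also fill in details the paper leaves implicit, namely the semidirect-product multiplication showing $\pi$ is a homomorphism, the section $\sigma\mapsto(1,\dots,1,\sigma)$, and the check that the rank-one convention for the trivial group causes no trouble.
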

The projection is clearly surjective; therefore any generating set of $H \wr S_n$ projects to a generating set of $S_n$, so the minimum number of generators of the larger group is at least the minimum number of generators of $S_n$.

\begin{lemma}\label{lem:rango-simetricos}
Let $I$ be a finite set of indices, and for each $i\in I$, let $\alpha_i\ge 2$ be a natural number. Then
\[
\rank\left( \prod_{i\in I} S_{\alpha_i} \right) \ge |I| + \Lambda,
\]
where $\Lambda = 1$ if there exists $i\in I$ with $\alpha_i\ge 3$, and $\Lambda=0$ otherwise (i.e., if all $\alpha_i=2$).
\end{lemma}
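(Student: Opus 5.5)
The natural route is through abelianization. Every generating set of a group $P$ maps onto a generating set of its abelianization $P^{ab}$, so $\rank(P)\ge \rank(P^{ab})$. For $\alpha\ge 2$ we have $S_\alpha^{ab}\cong \mathbb{Z}/2$ via the sign map, and abelianization commutes with finite direct products. Hence
\[
\left(\prod_{i\in I}S_{\alpha_i}\right)^{ab}\cong (\mathbb{Z}/2)^{|I|}.
\]
This is an $\mathbb{F}_2$-vector space of dimension $|I|$, so its rank is exactly $|I|$. This already proves the bound $\rank\left(\prod_{i\in I} S_{\alpha_i}\right)\ge |I|$, and it settles the case $\Lambda=0$. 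Indeed, when all $\alpha_i=2$ the product is $(\mathbb{Z}/2)^{|I|}$ itself and the bound is attained.

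The remaining step is the extra $+\Lambda$ when some $\alpha_{i_0}\ge 3$. For $|I|=1$ it is immediate: $S_\alpha$ with $\alpha\ge3$ is non-abelian, hence not cyclic, so its rank is at least $2=|I|+1$. I would also use Proposition~\ref{prop:rango-corona} (projection onto a factor) to pass to the single factor $S_{\alpha_{i_0}}$. For $|I|\ge 2$ the plan would be to show that a generating set of size exactly $|I|$ is impossible. Such a set would have to map to a basis of $(\mathbb{Z}/2)^{|I|}$. After a change of generators, its images in $(\mathbb{Z}/2)^{|I|}$ would be the standard basis vectors. One would then try to derive a contradiction from the non-abelian factor $S_{\alpha_{i_0}}$.

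I expect this last step to be the main obstacle, and in fact I believe it fails as stated. Take $S_3\times S_2$ with $a=((1\,2\,3),t)$ and $b=((1\,2),e)$, where $t$ generates $S_2$. Then $a$ has order $6$ and $a^3=(e,t)$, so $\langle a,b\rangle$ contains $(e,t)$, $((1\,2\,3),e)$ and $((1\,2),e)$. Hence $\langle a,b\rangle$ is the whole group, giving rank $2<|I|+\Lambda=3$. Similarly, $S_3\times S_3$ is $2$-generated, since it is a quotient of a $2$-generated group via diagonal-type generators. So before writing the proof I would restrict the statement so that it remains true. The options are to assert $\rank\ge |I|$ in general, or $\rank\ge |I|+\Lambda$ only when $|I|=1$. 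Equivalently, one could assert $\rank\ge \max(|I|,\,1+\Lambda)$, and this follows directly from the two arguments above. The lower bound in the main theorem of this section would then need to be adjusted accordingly.
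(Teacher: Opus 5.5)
Your analysis is correct: the statement is false as written, and it is the paper's proof, not your proposal, that fails. Your bound $\rank\bigl(\prod_{i\in I}S_{\alpha_i}\bigr)\ge |I|$ is the paper's own argument (sign characters onto $(\mathbb{Z}/2)^{|I|}$). The extra $+\Lambda$ is false, and your counterexample is valid. In $S_3\times S_2$ the element $a=((1\,2\,3),t)$ satisfies $a^3=(e,t)$ and $a^{-2}=((1\,2\,3),e)$, so $a$ and $b=((1\,2),e)$ generate the group, and its rank is $2<3=|I|+\Lambda$.

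For $S_3\times S_3$, your phrase ``a quotient of a 2-generated group via diagonal-type generators'' is too vague to be an argument. Give explicit generators instead, e.g.\ $x=((1\,2\,3),(1\,2))$ and $y=((1\,2),(1\,2\,3))$. Then $x^2=((1\,3\,2),e)$, $x^3=(e,(1\,2))$, $y^2=(e,(1\,3\,2))$ and $y^3=((1\,2),e)$, which generate both factors.

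The paper's proof breaks at the sentence claiming that, in a factor with $\alpha_i\ge 3$, ``at least one additional generator is needed that is not a transposition'' and that it ``can be the same for all factors''. That describes one particular generating set, so it is an upper-bound construction. It tacitly assumes that the $|I|$ generators detected by the sign characters are transpositions supported in single factors. A lower bound must hold for every generating set. Your element $a$ shows that a single generator can carry the odd part of one factor and a $3$-cycle of another, so the two requirements overlap rather than add.

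Your corrected bound $\max(|I|,1+\Lambda)$ does follow from your two arguments. Proposition~\ref{prop:rango-corona} concerns wreath products, not projection onto a direct factor, so it is not the right citation. The principle you need, that rank does not increase under epimorphisms, applies directly.

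Your closing remark is also right: the lower bound of the main rank theorem for $\AutG$ inherits the error. Let $G=S_3$ act on $X$ consisting of three fixed points and two copies of $G/\langle(1\,2)\rangle$. Then $\ConjX=\{[G],[\langle(1\,2)\rangle]\}$ and $N_G(H)/H$ is trivial for both classes. Hence $\AutG\cong S_3\times S_2$ has rank $2$, whereas $|\ConjX|-\kappa_G(X)+\Lambda_G(X)=2-0+1=3$.
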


\begin{proof}
Each factor $S_{\alpha_i}$ has a surjective homomorphism to $C_2$ given by the sign of the permutation. Therefore, the direct product projects onto $(C_2)^{|I|}$, which is a vector space of dimension $|I|$ over $\mathbb{F}_2$; at least $|I|$ elements are needed to generate it. Hence the direct product needs at least $|I|$ generators.

If some $\alpha_i\ge 3$, then the factor $S_{\alpha_i}$ is not cyclic (since $S_n$ is cyclic only for $n=1,2$). Therefore, in that factor at least one additional generator is needed that is not a transposition (for example, a cycle of length $\alpha_i$). This additional generator can be the same for all factors with $\alpha_i\ge 3$: simply take the product of the corresponding cycles in each factor (and the identity elsewhere). Then the rank is at least $|I|+1$ in that case.
\end{proof}

For each $i\in [r]$ with $\alpha_i\ge 2$, we define
\[
\pi_i : \AutG \longrightarrow \Sym(\alpha_i),
\qquad 
\pi_i(\rho)(j) := k \quad \text{if } \rho(Gx_j^i)=Gx_k^i.
\]
This map is a group homomorphism: if $\rho_1,\rho_2\in \AutG$, then $\pi_i(\rho_1\rho_2)=\pi_i(\rho_1)\pi_i(\rho_2)$ because the composition of permutations of orbits corresponds to the composition of the automorphisms. Moreover, $\pi_i$ is surjective: given any $\sigma\in\Sym(\alpha_i)$, the function $\rho_\sigma$ defined by $\rho_\sigma(g\cdot x_j^i):= g\cdot x_{\sigma(j)}^i$ (and the identity outside $\mathcal B_i$) is a $G$-equivariant automorphism satisfying $\pi_i(\rho_\sigma)=\sigma$.

Therefore, $\AutG$ projects onto $\prod_{i\in I} \Sym(\alpha_i)$, where $I=\{i\in[r]: \alpha_i\ge 2\}$.

\paragraph*{Proof of the lower bound.}
Let $I := \{ i\in [r] : \alpha_i \ge 2 \}$ be the set of indices of boxes with at least two orbits. Then $|I| = |\ConjX| - \kappa_G(X)$.

By Theorem 4, we have
\[
\AutG \cong \prod_{i=1}^r \left( (N_G(H_i)/H_i) \wr S_{\alpha_i} \right).
\]
For each $i\in I$, the canonical projection of the wreath product to its symmetric part (Proposition \ref{prop:rango-corona}) induces an epimorphism
\[
\AutG \twoheadrightarrow \prod_{i\in I} S_{\alpha_i}.
\]
Indeed, we can permute the orbits inside each box $\mathcal B_i$ independently, and these permutations are realized by $G$-equivariant automorphisms.

By Lemma \ref{lem:rango-simetricos}, the product $\prod_{i\in I} S_{\alpha_i}$ has rank at least
\[
|I| + \Lambda_G(X),
\]
where $\Lambda_G(X)=1$ if there is a box with $\alpha_i\ge 3$, and $0$ otherwise. 

Since the rank of a group is at least the rank of any of its quotients, we conclude that
\[
\rank(\AutG) \ge |I| + \Lambda_G(X)
= |\ConjX| - \kappa_G(X) + \Lambda_G(X).
\]

This proves the lower bound of Theorem 5.

Given a finite $G$-set $X$, for each $H_{i}\in \ConjX$ we consider a set of orbit representatives, which we denote by $X_{i}$. Without loss of generality, we can choose elements with the same stabilizer, precisely $H_{i}$. Since the box $\ibox$ has $\alpha_{i}$ orbits, in particular we have 
$$X_{i}:=\{x_{1}^{i},x_{2}^{i},...,x_{\alpha_{i}}^{i}\}.$$
Moreover, for each $i\in[r]$, $N_{G}(H_{i})/H_{i}$ is finite, and we take a generating set of minimal cardinality for it, say 
$S_{i}:=\{n_{1}^{i}H_{i},n_{2}^{i}H_{i},...,n_{k_{i}}^{i}H_{i}\}$. In particular, we denote a set of representatives of these classes as $[N_{G}(H_{i})/H_{i}]:=\{n_{1}^{i},n_{2}^{i},...,n_{k_{i}}^{i}\}$. With these elements we define and denote functions $$\eta_{t}^{i}:=(x_{i}\rightarrow n_{t}^{i}\cdot x_{i}),\mbox{ with }t\in[k_{i}].$$
Since $\{n_{1}^{i}H_{i},n_{2}^{i}H_{i},...,n_{k_{i}}^{i}H_{i}\}$ is a generating set for $N_{G}(H_{i})/H_{i}$, there exist classes $n_{1}H_{i},n_{2}H_{i},...,n_{k}H_{i}\in S_{i}$ such that 
$$nH_{i}=(n_{1}H_{i})(n_{2}H_{i})...(n_{k}H_{i})=(n_{1}...n_{k})H_{i}.$$
This implies that $n^{-1}n_{1}n_{2}...n_{k}\in H_{i}$ and
$$n^{-1}n_{1}n_{2}...n_{k}\cdot x= x,$$
$$n_{1}n_{2}...n_{k}\cdot x= n\cdot x.$$

\begin{lemma}
For $nH_{i}\in N_{G}(H_{i})/H_{i}$, there exist $n_{1},n_{2},...,n_{k}\in [N_{G}(H_{i})/H_{i}]$ such that, for any $x\in \mathcal{B}_{H_{i}}$,
$(x\rightarrow n\cdot x)= (x\rightarrow n_{k}\cdot x)...(x\rightarrow n_{2}\cdot x)(x\rightarrow n_{1}\cdot x)$.
\end{lemma}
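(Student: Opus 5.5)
The plan is to prove that $n\mapsto (x\rightarrow n\cdot x)$ is compatible with products of cosets, and then use the generating set $S_i$ of $N_G(H_i)/H_i$.

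First, fix $x\in\mathcal{B}_{H_i}$, so $G_x=H_i$. Recall that $(x\rightarrow n\cdot x)$ is well defined exactly when $n\in N_G(H_i)$. It acts as the identity off the orbit $Gx$ and by $h\cdot x\mapsto hn\cdot x$ on $Gx$.

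The key computation is a composition rule. For $m,n\in N_G(H_i)$ I will show
\[
(x\rightarrow m\cdot x)\circ(x\rightarrow n\cdot x)=(x\rightarrow nm\cdot x).
\]
Off $Gx$ both sides are the identity. For $z=h\cdot x$, the right factor sends $z$ to $hn\cdot x$. This point lies in $Gx$ and is written with group element $hn$, so the left factor sends it to $hnm\cdot x$. The right-hand side sends $h\cdot x$ to $hnm\cdot x$ as well.

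The only point needing care is well-definedness. The value $hn\cdot x$ must not depend on the chosen $h$. If $h\cdot x=h'\cdot x$, then $h^{-1}h'\in H_i$. Since $n$ normalizes $H_i$, we get $n^{-1}h^{-1}h'n\in H_i$, hence $hn\cdot x=h'n\cdot x$. The paper already asserts this property, so I will just cite it.

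Next, I will show the map depends only on the coset. If $nH_i=n'H_i$, then $n'=nh_0$ with $h_0\in H_i$. Hence $hn'\cdot x=hn\cdot x$, so $(x\rightarrow n\cdot x)=(x\rightarrow n'\cdot x)$.

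Now take $nH_i\in N_G(H_i)/H_i$. Since $S_i$ generates this quotient, which is finite (so inverses are positive powers), we can write $nH_i=(n_1\cdots n_k)H_i$ with each $n_j\in[N_G(H_i)/H_i]$, as in the discussion preceding the lemma.

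By coset-invariance, $(x\rightarrow n\cdot x)=(x\rightarrow n_1\cdots n_k\cdot x)$. Applying the composition rule inductively on $k$ then gives
\[
(x\rightarrow n_1n_2\cdots n_k\cdot x)=(x\rightarrow n_k\cdot x)\cdots(x\rightarrow n_2\cdot x)(x\rightarrow n_1\cdot x).
\]
The order reversal comes from the right-multiplication in the definition, and it matches the order in the statement.

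Finally, the $n_j$ were chosen independently of $x$, depending only on $nH_i$. Therefore the same factorization holds for every $x\in\mathcal{B}_{H_i}$.

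The main obstacle is keeping the order of composition consistent, that is, confirming the anti-homomorphism direction of the composition rule. The rest is routine.
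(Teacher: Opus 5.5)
Your proposal is correct and follows essentially the same route as the paper: the paper writes $n\cdot x=n_1\cdots n_k\cdot x$ and peels off one factor at a time using the definition of $(x\rightarrow n_k\cdot x)$, with equivariance reducing the check to the single point $x$. This is exactly your composition rule combined with coset-invariance. Your version is somewhat more careful, since it verifies the identity on every point $h\cdot x$ directly and notes explicitly that finiteness of $N_G(H_i)/H_i$ lets you avoid inverses of the chosen generators.
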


\begin{proof}
Since the functions $(x\rightarrow g\cdot x)$, when well-defined, only map elements of the orbit of $x$ and are $G$-equivariant, it suffices to check that they map $x$ to the same element. However
$$(x\rightarrow n\cdot x)(x)=n\cdot x$$  $$ = n_{1}n_{2}...n_{k}\cdot x$$
$$=(x\rightarrow n_{k}\cdot x)(n_1n_2...n_{k-1}\cdot x)$$
$$=(x\rightarrow n_{k}\cdot x)...(x\rightarrow n_{2}\cdot x)(x\rightarrow n_{1}\cdot x)(x).$$
\end{proof}

For an orbit $Gx$, we denote by $[Gx]$ a set of orbit representatives, i.e., $g\cdot x \neq h\cdot x$ if $g,h\in [Gx]$. Note that the same set of representatives works for every orbit in the same box; hence for each $i\in [r]$ we fix and denote a set of orbit representatives simply as $[X_{i}]$. We emphasize not to confuse the set of orbit representatives inside the box $X_{i}$, which are elements of $X$, and the set of representatives inside the orbits $[X_{i}]$, which consists of elements of the group $G$. \\

Now, for any bijective $G$-equivariant function $\sigma\in \AutG$, if the $n$-cycle $(t_{1}\ t_{2}\ ...\ t_{n})$ is part of its decomposition, then every element $t_{j}$ belongs to the same box $\ibox$, and moreover the product
$\prod_{g\in [X_{i}]}(g\cdot t_{1}\ g\cdot t_{2}\ ... \ g\cdot t_{n})$ factors $\sigma$. Abusing notation, we call the product $\prod_{g\in [X_{i}]}(g\cdot t_{1}\ g\cdot t_{2}\ ... \ g\cdot t_{n})$ an $n$-\emph{orbital cycle} and denote it simply as $(t_{1}\ t_{2}\ ...\ t_{n})$. We note that for each orbital cycle, all its representatives have exactly the same stabilizer, and consequently an orbital cycle fixes all elements outside a given box. For each box $\ibox$ we then define a pair of bijective $G$-equivariant functions:
$$s_{i}=(x_{1}^{i}\ x_{2}^{i}),$$
$$t_{i}=(x_{1}^{i}\ x_{2}^{i}\ ...\ x^{i}_{\alpha_{i}}).$$

If a cycle is closed in the set of orbit representatives $X_{i}$, we call it \emph{essential}, and we say that an orbital 2-cycle is \emph{consecutive} if it is of the form $(x_{j}^{i}\ x_{j+1}^{i})$.
Since orbital cycles are composed of cycles in $\Sym(X)$ and are disjoint, they satisfy many properties analogous to ordinary cycles, although they also have their own properties which help to prove some of the outstanding results of this work. 

\begin{lemma}\label{chido}
For a finite set $X$ on which $G$ acts, the following properties hold. 
\begin{enumerate}
\item Every $n$-orbital cycle decomposes as $n-1$ orbital 2-cycles. 
\item Every essential 2-cycle decomposes into consecutive orbital 2-cycles.

\item Every consecutive orbital 2-cycle decomposes using the corresponding $t_{i}$ and $s_{i}$. 
\item Every non-essential orbital 2-cycle has a representative such that one of its elements belongs to the set of representatives of the box. 
\item Every non-essential orbital 2-cycle with representatives in a box $\Hbox$ belongs to the subgroup generated by $\{\eta_{k},t,s\}$.
\item The elements $s_i=(x_1^i\ x_2^i)$ and $t_i=(x_1^i\ x_2^i\ \cdots\ x_{\alpha_i}^i)$ generate the group of all permutations of the set of orbits of the box $\ibox$. That is,
\[
\langle s_i, t_i \rangle \cong \Sym(\alpha_i),
\]
where $\alpha_i = |G/\ibox|$.
\end{enumerate}
\end{lemma}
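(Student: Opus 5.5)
I will prove the six items in order, treating orbital cycles as ordinary permutations of the orbit set $G/\ibox$ lifted equivariantly, together with the "twist" maps $\eta^i_t$ that act inside a single orbit. The basic observation is that an orbital cycle $(t_1\ \dots\ t_n)$ is determined by the $G$-equivariant map sending $t_j\mapsto t_{j+1}$ (indices mod $n$). Composition of orbital cycles with representatives in one box $\ibox$ is therefore computed exactly as in $\Sym(X)$, restricted to the chosen representatives and extended by $G$-equivariance. This uses Lemma \ref{lema1} ii), since all representatives in a box may be chosen with the same stabilizer $H_i$.

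\textbf{Items 1, 2, 3 and 6.} For item 1, I check the identity $(t_1\ \dots\ t_n)=(t_1\ t_n)\cdots(t_1\ t_3)(t_1\ t_2)$ on each representative $t_j$. Both sides are $G$-equivariant, so by the same argument as in the preceding lemma it suffices to compare values on representatives, and this reduces to the classical identity in the symmetric group. For item 2, an essential 2-cycle $(x^i_j\ x^i_k)$ with $j<k$ is a transposition of the orbit set. I conjugate it down to consecutive ones via $(x_j\ x_k)=(x_{k-1}\ x_k)\cdots(x_{j}\ x_{j+1})\cdots(x_{k-1}\ x_k)$, which is again a classical identity transported equivariantly. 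For item 3, I use $t_i^{\,j-1}s_it_i^{-(j-1)}=(x^i_j\ x^i_{j+1})$, verified on representatives. Item 6 then follows by combining items 1–3. Every permutation of the orbit set is a product of cycles on representatives, hence of transpositions, hence of consecutive transpositions, hence lies in $\langle s_i,t_i\rangle$. Moreover, the map $\langle s_i,t_i\rangle\to \Sym(\alpha_i)$ given by $\pi_i$ is injective on this subgroup. A product of essential orbital cycles that induces the identity permutation on orbits fixes each representative $x_j^i$, hence is the identity by equivariance. This gives the isomorphism.

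\textbf{Items 4 and 5.} For item 4, take a non-essential orbital 2-cycle $(g\cdot x^i_j\ h\cdot x^i_k)$ with $G_{gx_j}=G_{hx_k}$. Applying $G$-equivariance, I can replace it by the representative obtained by multiplying by $g^{-1}$, namely $(x^i_j\ g^{-1}h\cdot x^i_k)$, whose first entry is in $X_i$. The equality of stabilizers forces $g^{-1}h$ to normalize $H_i$ (modulo the choice of representatives), so that $g^{-1}h\cdot x_k^i=n\cdot x_k^i$ with $nH_i\in N_G(H_i)/H_i$. For item 5, I write
\[
(x_j\ n\cdot x_k)=(x_k\rightarrow n\cdot x_k)\,(x_j\ x_k)\,(x_k\rightarrow n\cdot x_k)^{-1},
\]
checked on representatives. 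The middle factor lies in $\langle s,t\rangle$ by items 2–3. The maps $(x_k\rightarrow n\cdot x_k)$ are conjugates by essential cycles of $(x_1\rightarrow n\cdot x_1)$, which by the previous lemma are products of the $\eta^i_t$. The case $j=k$ (a swap inside one orbit) needs separate handling: it is the map $x\mapsto n x$ when $n^2\in H_i$, and is directly a product of $\eta$'s.

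\textbf{Main obstacle.} The delicate step is item 4: showing that the group element relating the two entries can be taken in $N_G(H_i)$. This requires a careful choice of representatives so that stabilizers are literally equal, not just conjugate. I would first normalize both entries to have stabilizer exactly $H_i$ by translating within orbits, and then use that $G_{n x}=nH_in^{-1}=H_i$ forces $n\in N_G(H_i)$.
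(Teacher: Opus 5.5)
Your proposal is correct and follows essentially the same route as the paper. Items 1--3 and 6 use classical symmetric-group identities transported equivariantly. For items 4--5 you translate by $g^{-1}$ so that one entry of a non-essential orbital 2-cycle is a box representative, and then write the cycle as an essential 2-cycle conjugated by a within-orbit twist $(x\rightarrow n\cdot x)$ with $n\in N_G(H_i)$; this is exactly the paper's argument.

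Your version is in fact slightly more careful than the paper's in three places. You check that the map $\langle s_i,t_i\rangle\to\Sym(\alpha_i)$ is injective, and you reduce $(x_k\rightarrow n\cdot x_k)$ to the $\eta$'s by conjugating with $(x_1\ x_k)$. You also treat separately the swap inside a single orbit, which the paper's formula $(x\rightarrow n^{-1}\cdot x)(x\ t)(x\rightarrow n\cdot x)$ does not cover.
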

\begin{proof}\noindent
\begin{enumerate}
\item We have that $$(t_{1}\ t_{2}\ ...\ t_{n})=(t_{1}\ t_{2})(t_{2}\ t_{3})...(t_{k-1\ t_{k}}).$$

\item Assume without loss of generality that $n< m$, then it holds that 
$$(x^{i}_{n}\ x^{i}_{m})=(x^{i}_{n}\ x^{i}_{n+1})(x^{i}_{n+1}\ x^{i}_{n+2})...(x^{i}_{m-1}\ x^{i}_{m})(x^{i}_{m-2}\ x^{i}_{m-1})(x^{i}_{m-3}\ x^{i}_{m-2})...(x^{i}_{n}\ x^{i}_{n+1}).$$
\item It is not difficult to verify the following equality. 
$$(x^{i}_{n}\ x^{i}_{n+1})=(t_{i})^{n}(s_{i})(t_{i})^{-n}.$$
\item If $(x\ y)$ is a non-essential orbital 2-cycle, there exists $g\in G$ such that $g\cdot x\in X_{i}$ for some $i\in [r]$, and by $G$-equivariance $(x\ y)=(g\cdot x\ g\cdot y)$.
\item By the previous point, for $(x\ y)$ a non-essential orbital 2-cycle, we may assume without loss of generality that $x\in X_{H}$. Then $G_{y}=H$ and there exists $n\in N_{H}$ such that $t=n^{-1}\cdot y\in X_{H}$. Thus we can rewrite the orbital 2-cycle as $(x\ n\cdot t)$. Then the following equality holds. 
$$(x\ n\cdot t)=(n^{-1}\cdot x\ t)=(x\rightarrow n^{-1}\cdot x)(x\ t)(x\rightarrow n\cdot x).$$
Where $(x\ t)$ is an essential orbital 2-cycle.
\item Under the identification $x_j^i \leftrightarrow Gx_j^i$, the element $s_i$ is the transposition $(1\ 2)$ and $t_i$ is the cycle $(1\ 2\ \cdots\ \alpha_i)$. It is well known that $\Sym(\alpha_i)$ is generated by these two permutations; hence $\langle s_i,t_i\rangle$ acts as the full symmetric group on the orbits of $\ibox$.
\end{enumerate}
\end{proof}

A direct consequence of this lemma is the following. 
\begin{corollary}
Every $n$-orbital cycle belongs to the subgroup generated by the set 
$$\left\{(x^{i}\rightarrow n_{k_{i}}^{i}\cdot x^{i}),t_{i},s_{i}|\ i\in[r],k_{i}\in [r_{i}]\}\right.$$
\end{corollary}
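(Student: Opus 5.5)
The plan is to chain the items of Lemma \ref{chido} in order, reducing an arbitrary $n$-orbital cycle to products of the proposed generators $(x^{i}\rightarrow n_{k_{i}}^{i}\cdot x^{i})$, $t_{i}$, $s_{i}$. Fix an $n$-orbital cycle $(y_{1}\ y_{2}\ \dots\ y_{n})$. As noted before Lemma \ref{chido}, all of its entries lie in a single box $\ibox$ and share a common stabilizer. By item 1 of Lemma \ref{chido}, it suffices to show that every orbital 2-cycle $(x\ y)$ with $x,y\in\ibox$ lies in the subgroup generated by the displayed set, since the $n$-orbital cycle is a product of $n-1$ such 2-cycles.

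Next I would split into two cases. If $(x\ y)$ is essential, item 2 writes it as a product of consecutive orbital 2-cycles $(x^{i}_{j}\ x^{i}_{j+1})$. Item 3 then expresses each of these as $t_{i}^{j}s_{i}t_{i}^{-j}$, so it lies in $\langle s_{i},t_{i}\rangle$. If $(x\ y)$ is non-essential, item 4 replaces it by a $G$-translate $(g\cdot x\ g\cdot y)$, which is the same orbital 2-cycle by $G$-equivariance, with $g\cdot x\in X_{i}$. Item 5 then conjugates by $(x\rightarrow n\cdot x)$ for a suitable $n\in N_{G}(H_{i})$, turning it into an essential orbital 2-cycle, which is handled by the first case.

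It remains to see that the conjugating maps $(x\rightarrow n\cdot x)$ lie in the generated subgroup. Here I would invoke the lemma preceding the definition of orbital cycles. It factors $(x\rightarrow n\cdot x)$ as a product of the maps $(x\rightarrow n_{t}^{i}\cdot x)$, with the $n_{t}^{i}$ running over the chosen representatives of a minimal generating set of $N_{G}(H_{i})/H_{i}$. Inverses pose no problem: $X$ is finite, so each generator has finite order. One could also use $(x\rightarrow g\cdot x)^{-1}=(x\rightarrow g^{-1}\cdot x)$.

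I expect the main obstacle to be the bookkeeping in item 5, namely making sure that the element $n$ relating $y$ to a representative in $X_{i}$ genuinely lies in $N_{G}(H_{i})$. For this, I would first translate so that $G_{x}=H_{i}$. Then $y$ and the chosen representative $x^{i}_{j}$ of its orbit both have stabilizer exactly $H_{i}$, after adjusting $y$ within its orbit. Writing $y=g\cdot x^{i}_{j}$ gives $gH_{i}g^{-1}=H_{i}$, so $g\in N_{G}(H_{i})$ and the maps used are well defined. Everything else is a direct application of earlier results.
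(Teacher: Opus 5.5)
Your route is the paper's own. There the corollary is stated as a direct consequence of Lemma~\ref{chido}: item 1, then items 2--3 for the essential case, then items 4--5 for the non-essential case, with the factorization lemma for $(x\rightarrow n\cdot x)$ supplying the conjugators. However, one step does not close as written.

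The displayed set is indexed only by $i\in[r]$ and $k_i\in[r_i]$. So for each box it contains only the maps $(x^{i}\rightarrow n^{i}_{k_i}\cdot x^{i})$ based at a \emph{single} fixed representative $x^{i}\in X_i$. The conjugator produced by item 5 is based at the representative $x^{i}_{j}$ of whichever orbit occurs in the 2-cycle. The factorization lemma then writes it as a product of maps $(x^{i}_{j}\rightarrow n^{i}_{t}\cdot x^{i}_{j})$, and these are not in the displayed set when $x^{i}_{j}\neq x^{i}$. You need a transport step: for $m\in N_G(H_i)$,
\[
(x^{i}_{j}\rightarrow m\cdot x^{i}_{j})=(x^{i}\ x^{i}_{j})\,(x^{i}\rightarrow m\cdot x^{i})\,(x^{i}\ x^{i}_{j}).
\]
You can check this by evaluating on $g\cdot x^{i}_{j}$ and $g\cdot x^{i}$, using $G_{x^{i}}=G_{x^{i}_{j}}=H_i$. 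The essential orbital 2-cycle $(x^{i}\ x^{i}_{j})$ lies in $\langle s_i,t_i\rangle$ by items 2 and 3. This is the same device the paper uses in the proof of its generation theorem for $\AutG$. The paper's one-line derivation of the corollary, through item 5, glosses over this point as well.

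A smaller point about your last paragraph: do not ``adjust $y$ within its orbit'' separately, since replacing $y$ by $h\cdot y$ changes the orbital 2-cycle. No adjustment is needed. The only legitimate move is the simultaneous translation $(x\ y)=(g\cdot x\ g\cdot y)$. Because $(x\ y)$ is a $G$-equivariant bijection sending $x$ to $y$, Lemma~\ref{lema1}(ii) gives $G_y=G_x$. So once $G_{g\cdot x}=H_i$ you automatically have $G_{g\cdot y}=H_i$, and writing $g\cdot y=n\cdot x^{i}_{l}$ forces $n\in N_G(H_i)$, as you say.
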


\begin{claim}
For any orbit representatives $p,q,x$ of a box $\Hbox$ and any $n,m\in [N_{G}(H)/H]$, it holds that 
$$(x\ p)(x\rightarrow n\cdot x)(x\ p)(x\ q)(x\rightarrow m\cdot x)(x\ q)=(x\ q)(x\rightarrow m\cdot x)(x\ q)(x\ p)(x\rightarrow n\cdot x)(x\ p).$$
\end{claim}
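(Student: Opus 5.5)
The plan is to identify each of the two conjugated factors as an explicit ``local'' automorphism supported on a single orbit, and then observe that they have disjoint supports. Since $(x\ p)$ is an orbital 2-cycle, it is an involution and $(x\ p)^{-1}=(x\ p)$. Hence $(x\ p)(x\rightarrow n\cdot x)(x\ p)$ is the conjugate of $(x\rightarrow n\cdot x)$ by $(x\ p)$.

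First consider the case $p\neq x$ and $q\neq x$. The map $(x\rightarrow n\cdot x)$ is the identity outside $Gx$ and sends $h\cdot x\mapsto hn\cdot x$ on $Gx$. The orbital transposition $(x\ p)$ exchanges $Gx$ and $Gp$ equivariantly via $g\cdot x\leftrightarrow g\cdot p$. I would compute the conjugate pointwise and check that it is the identity outside $Gp$. On $Gp$ it sends $h\cdot p\mapsto hn\cdot p$; in other words,
$$(x\ p)(x\rightarrow n\cdot x)(x\ p)=(p\rightarrow n\cdot p).$$
This is well-defined because $G_p=G_x=H$ and $n\in N_G(H)$. Concretely, take $z=h\cdot p$. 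Then $(x\ p)(z)=h\cdot x$, next $(x\rightarrow n\cdot x)(h\cdot x)=hn\cdot x$, and finally $(x\ p)(hn\cdot x)=hn\cdot p$. Points outside $Gx\cup Gp$ are fixed by all three maps. For points $h\cdot x\in Gx$, the first factor sends them to $h\cdot p$, which lies outside $Gx$, so the middle map fixes it, and the last factor returns $h\cdot x$. In the same way, the right-hand factor equals $(q\rightarrow m\cdot q)$.

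The identity then reduces to $(p\rightarrow n\cdot p)(q\rightarrow m\cdot q)=(q\rightarrow m\cdot q)(p\rightarrow n\cdot p)$. If $p\neq q$, the two maps are supported on the disjoint orbits $Gp$ and $Gq$, since distinct orbit representatives lie in distinct orbits. Each map preserves its own orbit and fixes everything else, so they commute; checking separately on $Gp$, on $Gq$, and on the complement makes this immediate. If $p=q$, both maps are supported on $Gp$, and the claim becomes $(p\rightarrow n\cdot p)(p\rightarrow m\cdot p)=(p\rightarrow m\cdot p)(p\rightarrow n\cdot p)$. Evaluated at $h\cdot p$, the two sides give $hmn\cdot p$ and $hnm\cdot p$ respectively. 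These agree only when $mnH=nmH$, which fails in general when $N_G(H)/H$ is non-abelian.

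The degenerate cases are the main obstacle. For $p=q$ I would read the claim as intended for distinct representatives, presumably with $p,q,x$ pairwise distinct as in its later use, and state that restriction explicitly. If $p=x$, the orbital 2-cycle $(x\ x)$ is the identity, and the corresponding factor reduces to $(x\rightarrow n\cdot x)$ itself. When $q\neq x$, this still commutes with $(q\rightarrow m\cdot q)$ by the disjoint-support argument above. So the only genuine restriction is $p\neq q$ whenever $N_G(H)/H$ is non-abelian, and I would note that hypothesis rather than try to prove the identity in that case.
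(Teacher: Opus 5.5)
Your proposal is correct and is essentially the direct evaluation that the paper leaves to the reader. Identifying $(x\ p)(x\rightarrow n\cdot x)(x\ p)$ with $(p\rightarrow n\cdot p)$, the identity the paper itself uses in the proof of its generating-set theorem, and then using disjoint supports is exactly the computation needed. Your caveat is also correct and exposes a genuine defect in the statement as written: for $p=q$ the two sides are the conjugates by $(x\ p)$ of $(x\rightarrow mn\cdot x)$ and $(x\rightarrow nm\cdot x)$, so the identity fails whenever $N_G(H)/H$ is non-abelian, since some pair of the chosen generators then does not commute (e.g.\ $G=S_3$ acting by left multiplication on two copies of itself, with $H=1$), and the hypothesis $p\neq q$, which is the case the paper actually needs, has to be added.
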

That is, the elements of the form $(x\ p)(x\rightarrow n\cdot x)(x\ p)$ commute with each other. This claim is easy to verify by carefully performing the corresponding evaluations.

\begin{lemma}
For each $G$-equivariant function $\tau\in \AutG$, for each $i\in[r]$, there exist functions defined as 
$$\tau_{i}(z):=\left\{\begin{array}{cc} \tau(z) & z\in \ibox, \\ x & \text{otherwise,} \end{array} \right.$$
such that $$\tau=\tau_{1}\tau_{2}...\tau_{r}.$$
Moreover, these commute with each other. 
\end{lemma}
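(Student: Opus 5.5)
The plan is to read the statement with the evident correction that ``$x$ otherwise'' means $z$ otherwise. Each $\tau_i$ is then the extension $\widehat{\tau|_{\ibox}}$ of the restriction of $\tau$ to the box $\ibox$. The proof has four steps.

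\textbf{Step 1: $\tau$ preserves each box.} Take $z\in\ibox$. Since $\tau$ is a bijective $G$-equivariant function, part ii) of Lemma \ref{lema1} (applied with $\tau$ itself witnessing $\tau(z)$) gives $G_{\tau(z)}=G_z$. Concretely, $g\cdot z=z$ implies $g\cdot\tau(z)=\tau(z)$, so $G_z\le G_{\tau(z)}$. Applying the same argument to $\tau^{-1}$, which is also $G$-equivariant, gives $G_{\tau(z)}\le G_z$. Hence $[G_{\tau(z)}]=[G_z]=[H_i]$, that is, $\tau(\ibox)\subseteq\ibox$. The same holds for $\tau^{-1}$, so $\tau(\ibox)=\ibox$ and $\tau|_{\ibox}$ is a bijection of $\ibox$.

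\textbf{Step 2: each $\tau_i$ lies in $\AutG$.} Boxes are $G$-invariant, since $G_{g\cdot z}=gG_zg^{-1}$. Therefore the piecewise definition of $\tau_i$ is $G$-equivariant on $\ibox$, because it agrees there with $\tau$. It is also equivariant on the complement, where it is the identity. By Step 1, $\tau_i$ is bijective, so $\tau_i\in\AutG$.

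\textbf{Step 3: the factorization.} The family $\{\ibox\}_{i\in[r]}$ partitions $X$, because $X$ is finite and every $x$ has exactly one class $[G_x]$. Fix $z\in\mathcal{B}_j$ and evaluate $\tau_1\tau_2\cdots\tau_r(z)$ from the right. Every $\tau_i$ with $i\neq j$ fixes every point of $\mathcal{B}_j$, and by Step 1 both $z$ and $\tau_j(z)=\tau(z)$ lie in $\mathcal{B}_j$. So every factor other than $\tau_j$ acts trivially along the way, and the composite sends $z$ to $\tau(z)$.

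\textbf{Step 4: commutativity.} For $i\neq k$, the maps $\tau_i$ and $\tau_k$ have supports contained in the disjoint invariant sets $\ibox$ and $\mathcal{B}_k$. Both orders of composition therefore give $\tau(z)$ on $\ibox\cup\mathcal{B}_k$ and the identity elsewhere, so $\tau_i\tau_k=\tau_k\tau_i$. This is the same disjoint-support argument used in the proof of Corollary \ref{cor:cociente-sym}.

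The only nonroutine point is Step 1, namely that an automorphism cannot move a point into a different box. This is where bijectivity is essential, since a general element of $\EndG$ can collapse a box into a larger one. The inequality on stabilizers in both directions settles it.
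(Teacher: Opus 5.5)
Your proof is correct and follows essentially the same route as the paper's. The paper also uses that $\tau$ maps each box $\mathcal{B}_i$ into itself, so for $z\in\mathcal{B}_i$ every $\tau_k$ with $k\neq i$ fixes both $z$ and $\tau(z)$, which gives $\tau=\tau_1\cdots\tau_r$ in any order of the factors and hence commutativity; the paper merely asserts box preservation and leaves $\tau_i\in\mathrm{Aut}_G(X)$ implicit, so your Steps 1 and 2 (via $G_{\tau(z)}=G_z$ and the $G$-invariance of boxes) supply details it omits.
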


\begin{proof}
For any $z\in \ibox$ and any $i\in[r]$, we have $\tau(x)\in \ibox$. Consequently $\tau_{k}(z)=z$ and $\tau_{k}(\tau(z))=\tau(z)$ for all $k\neq i$. As a consequence of these observations, the equality holds for any order of the functions $\tau_{i}$, yielding $\tau(z)$. 
\end{proof}

\begin{theorem}
Given a finite set $X$ and a group $G$ acting on it, for a set of representatives of a minimal generating set of $N_{G}(H_{i})/H_{i}$, say $\{n_{1}^{i},n_{2}^{i},...,n_{r_{i}}^{i}\}$, and sets of orbit representatives in the boxes $X_{i}$, it holds that$$\AutG=\langle \{(x^{i}\rightarrow n^{i}_{k_{i}}\cdot x^{i}),t_{i},s_{i}|\ i\in[r],\ k_{i}\in[r_{i}]\} \rangle.$$
\end{theorem}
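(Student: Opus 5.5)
The inclusion ``$\supseteq$'' is immediate: each $(x^{i}\rightarrow n^{i}_{k_i}\cdot x^{i})$, $s_i$ and $t_i$ is a bijective $G$-equivariant function. For ``$\subseteq$'', write $\Gamma$ for the subgroup generated by the proposed set. I would fix an arbitrary $\tau\in\AutG$ and reduce in three stages: first to a single box, then to the orbit-level permutation inside that box, and finally to the ``internal'' twisting on each orbit.

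\emph{Step 1 (reduction to one box).} By the preceding lemma, $\tau=\tau_1\tau_2\cdots\tau_r$, where each $\tau_i$ agrees with $\tau$ on $\ibox$ and is the identity elsewhere. So it suffices to show $\tau_i\in\Gamma$ for each $i$.

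\emph{Step 2 (orbit permutation).} The map $\pi_i$ sends $\tau_i$ to some $\sigma\in\Sym(\alpha_i)$. Write $\sigma$ as a product of transpositions. By Lemma~\ref{chido}(6), equivalently by parts (2) and (3), the corresponding essential orbital $2$-cycles $(x^i_j\ x^i_k)$ lie in $\langle s_i,t_i\rangle$. Let $\rho$ be their product. Then $\rho^{-1}\tau_i$ is a bijective $G$-equivariant map supported on $\ibox$ that fixes every orbit of $\ibox$ setwise.

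\emph{Step 3 (orbitwise twists).} Such a map $\mu=\rho^{-1}\tau_i$ sends each $x^i_j$ to $g_j\cdot x^i_j$. Lemma~\ref{lema1} and equivariance force $G_{g_j x^i_j}=H_i$, so $g_j\in N_G(H_i)$. Hence $\mu$ is the product, over $j$, of the commuting maps $(x^i_j\rightarrow g_j\cdot x^i_j)$, each supported on the single orbit $Gx^i_j$. For $j=1$, the lemma expressing $(x\rightarrow n\cdot x)$ as a product of the $(x\rightarrow n_k\cdot x)$ with $n_k$ in the chosen generating representatives shows that this factor lies in $\Gamma$. For $j\neq 1$, I would conjugate by the essential orbital transposition $(x^i_1\ x^i_j)\in\langle s_i,t_i\rangle$:
$$(x^i_j\rightarrow g\cdot x^i_j)=(x^i_1\ x^i_j)(x^i_1\rightarrow g\cdot x^i_1)(x^i_1\ x^i_j).$$
This is exactly the type of element appearing in the Claim, and the Claim guarantees that these factors commute and may be multiplied in any order. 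This gives $\mu\in\Gamma$, hence $\tau_i=\rho\mu\in\Gamma$.

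The main obstacle is making Step 3 rigorous. First, the normalizer elements $g_j$ must be read off consistently, since the choice of stabilizer-$H_i$ representatives $x^i_j$ matters. Second, the conjugation identity above must be verified carefully, because the orbital transposition maps $h\cdot x^i_1\mapsto h\cdot x^i_j$ only for stabilizer-compatible representatives. I would check this identity pointwise on $h\cdot x^i_1$ and $h\cdot x^i_j$ using $G$-equivariance, noting that both sides are identity outside $Gx^i_1\cup Gx^i_j$.

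Finally, the case $\alpha_i=1$ needs separate mention: then $s_i$ and $t_i$ are trivial or undefined and Step 2 is vacuous. Boxes with trivial $N_G(H_i)/H_i$ likewise contribute no $\eta$-generators, and Step 3 is vacuous there.
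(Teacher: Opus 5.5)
Your argument is correct, but it is organized differently from the paper's.

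\textbf{The paper's route.} The paper splits each $\tau_i$ as $\tau_i'\tau_i''$, according to whether an orbit of $\ibox$ is fixed setwise or moved by $\tau$. It writes $\tau_i'$ as a product of twists. It factors $\tau_i''$ cycle by cycle as $(x^i\ x_t^i)(x^i\rightarrow n_t\cdot x^i)(x^i\ x_t^i)$ times the orbital cycle $(x_t^i\ \tau(x_t^i)\ \cdots\ \tau^{m_t-1}(x_t^i))$. Here $n_t$ records the twist picked up after going once around the cycle. Those orbital cycles pass through points $\tau^q(x_t^i)$ that are generally not among the chosen representatives. So the paper must use parts (4)--(5) of Lemma~\ref{chido} to reduce non-essential cycles to essential ones via the $\eta$'s.

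\textbf{Your route.} You use the semidirect-product structure $(N_G(H_i)/H_i)^{\alpha_i}\rtimes \Sym(\alpha_i)$ directly. You first divide off a product $\rho\in\langle s_i,t_i\rangle$ of essential orbital transpositions with $\pi_i(\rho)=\pi_i(\tau_i)$. What remains is a pure orbitwise twist $\mu$. Each of its factors is the conjugate of a twist on the distinguished orbit by an essential transposition, via the identity you wrote.

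\textbf{What each buys.} Your version never touches non-essential cycles. It also makes explicit the conjugation needed for twists on orbits other than $Gx^i$. The paper skips this step: it asserts $\tau_i'\in\langle \eta^i_{k_i}\rangle$ alone, which is only literally true when $\tau_i'$ twists just the distinguished orbit. In exchange, the paper's version displays the cycle structure of $\tau_i$ explicitly, namely the orbit cycles together with their return twists $n_t$.

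\textbf{Two small remarks.} Take $x^i$ in the statement to be $x^i_1$, or conjugate to whichever representative carries the $\eta$'s. You also do not need the Claim: the factors $(x_j^i\rightarrow g_j\cdot x_j^i)$ have pairwise disjoint supports and therefore commute trivially.
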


\begin{proof}
Let $\tau\in \AutG$, and consider its decomposition $\tau=\tau_{1}\tau_{2}...\tau_{r}$. It is enough to prove that $\tau_{i}$ is generated by the proposed set for any $i\in [r]$. We propose a new decomposition for these functions. Consider the sets 
$$\ibox':=\{z\in \ibox|\ \tau(Gz)=Gz\},$$
$$\ibox'':=\{z\in \ibox|\ \tau(Gz)\neq Gz\}.$$
Then we define a pair of functions as follows:
$$\tau_{i}'(z):=\left\{ \begin{array}{cc}\tau_{i}(z)& z\in \ibox'\\ z& \text{otherwise.}  \end{array}\right.$$
$$\tau_{i}''(z):=\left\{ \begin{array}{cc}\tau_{i}(z)& z\in \ibox''\\ z& \text{otherwise.}  \end{array}\right.$$
It is not hard to see that $\tau_{i}=\tau'_{i}\tau''_{i}$. 

Note that each set $\ibox'$ and $\ibox''$ is $G$-invariant, closed under the group action, and for each element $x^{i}_{t}\in \ibox'\cap X_{i}$ there exists $n_{t}\in N_{G}(H_{i})/H_{i}$ such that 
$$\tau(x^{i}_{t})=\tau_{i}(x^{i}_{t})=\tau'_{i}(x^{i}_{t})=n_{t}\cdot x^{i}_{t}.$$
Denoting by $[\ibox'\cap X_{i}]$ the set of indices of the elements $x^{i}_{t}$ in $\ibox'\cap X_{i}$, some calculations show that 
$$\tau'_{i}=\prod_{t\in [\ibox'\cap X_{i}]}(x^{i}\rightarrow n_{t}\cdot x^{i}).$$
From Lemma \ref{chido} we know that each $(x^{i}\rightarrow n_{t}\cdot x^{i})$ is generated by the set $\{\eta_{k_{i}}^{i}: \ k_{i}\in [r_{i}]\}$. Consequently
$$\tau'_{i}\in \langle\{\eta_{k_{i}}^{i}: \ k_{i}\in [r_{i}] \}\rangle \leq \langle\eta_{k_{i}}^{i},t_{i},s_{i}:i\in[r], \ k_{i}\in [r_{i}] \rangle.$$

On the other hand, for each element $z\in \mathcal B_i''$, consider the $G$-orbit $Gz$. Since $\mathcal B_i''$ is finite (as $X$ is finite), the function $\tau_i''$ induces a permutation on the set of $G$-orbits contained in $\mathcal B_i''$. Therefore, there exists a minimal natural number $m$ such that $(\tau_i'')^m(Gz)=Gz$. Moreover, since $(\tau_i'')^m(z)\in Gz$, there exists $n\in G$ such that $(\tau_i'')^m(z)=n\cdot z$. Since $(\tau_i'')^m$ is $G$-equivariant and preserves the box $\mathcal B_i$, we have $G_{(\tau_i'')^m(z)}=G_z$; consequently, $n^{-1}G_z n = G_z$, i.e., $n\in N_G(H_i)$. Hence $n$ can be considered as an element of $N_G(H_i)/H_i$.

We may assume without loss of generality that $z=x_t^i$ is the representative of its orbit in the set $X_i$. The restriction of $\tau_i''$ to the union of orbits
\[
Gx_t^i \cup G\tau(x_t^i) \cup \cdots \cup G\tau^{m-1}(x_t^i)
\]
decomposes as
\[
\tau_i''|_{Gx_t^i \cup \cdots \cup G\tau^{m-1}(x_t^i)}
=
(x^i\ x_t^i)(x^i\to n\cdot x^i)(x^i\ x_t^i)
\, (x_t^i\ \tau(x_t^i)\ \tau^2(x_t^i)\ \cdots \ \tau^{m-1}(x_t^i)).
\]
Indeed, the last factor is the $m$-orbital cycle permuting the representatives $x_t^i, \tau(x_t^i), \dots, \tau^{m-1}(x_t^i)$. The factor
\[
(x^i\ x_t^i)(x^i\to n\cdot x^i)(x^i\ x_t^i)=(x_t^i\to n\cdot x_t^i)
\]
acts as the internal translation on the orbit $Gx_t^i$ once the orbital cycle returns to the starting point. The verification is carried out by evaluating directly on elements of the form $g\cdot \tau^q(x_t^i)$, with $q=0,1,\dots,m-1$ and $g\in [Gx_t^i]$.

Varying $z$ over a set of representatives of the orbits under this permutation, we obtain a partition of $\mathcal B_i''$ into disjoint cycles. Let $[\mathcal B_i''/\sim]$ denote the set of indices of the representatives $x_t^i$ that start each cycle. Then
\[
\tau_i''=
\prod_{t\in [\mathcal B_i''/\sim]}
(x^i\ x_t^i)(x^i\to n_t\cdot x^i)(x^i\ x_t^i)
\, (x_t^i\ \tau(x_t^i)\ \tau^2(x_t^i)\ \cdots \ \tau^{m_t-1}(x_t^i)),
\]
where $m_t$ is the minimal natural number and $n_t\in N_G(H_i)/H_i$ is the corresponding element for the cycle beginning at $x_t^i$.

Each factor of this product is an $m_t$-orbital cycle (the right-hand factor) conjugated by elements of the proposed set; by Lemma \ref{chido}, such a factor belongs to the subgroup generated by $\{(x^i\to n_{k_i}^i\cdot x^i), t_i, s_i\}$. Therefore, $\tau_i''$ belongs to that subgroup. Since $\tau_i'$ also belongs to it (by the second part) and $\tau_i=\tau_i'\tau_i''$, it follows that $\tau_i$ lies in the subgroup. Hence $\tau=\tau_1\cdots\tau_r$ also lies in it, completing the proof.

\end{proof}

Next, we will use an elementary property of generating sets of a group to reduce the number of generators. Recall that if $V$ is a subset of a group $G$ and $t,s\in G$, then
\[
\langle V,t,s\rangle = \langle V,ts\rangle \quad\Longleftrightarrow\quad t,s\in \langle V,ts\rangle.
\]
This equivalence will allow us to eliminate some redundant generators by combining several into a single element, as long as the remaining generators allow us to recover the factors.

\begin{theorem}
Given a finite set $X$ and a group $G$ acting on it, it holds that 
$$\AutG=\langle \{(x\rightarrow n^{j}_{i_{j}}\cdot x),\prod_{k=1}^{r}t_{k},s_{j}|\ j\in[r],\ i_{j}\in[r_{j}]\} \rangle.$$
\end{theorem}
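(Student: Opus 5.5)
The plan is to reduce the statement to the previous theorem, which shows that $\AutG$ is generated by the elements $(x^{i}\rightarrow n^{i}_{k_{i}}\cdot x^{i})$, $t_{i}$ and $s_{i}$ for $i\in[r]$. Put
\[
V:=\{(x\rightarrow n^{j}_{i_{j}}\cdot x),\ s_{j}\mid j\in[r],\ i_{j}\in[r_{j}]\}
\qquad\text{and}\qquad
T:=\prod_{k=1}^{r}t_{k}.
\]
By the elementary equivalence recalled just before the statement (applied repeatedly), it is enough to show that every $t_{i}$ lies in $\langle V,T\rangle$. Each $t_{i}$ is a product of the generators of the previous theorem, so $T$ lies in that group. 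Hence the two generated groups then coincide.

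\textbf{Step 1: isolating box $i$ by conjugation.} The first step is a support argument. Each orbital cycle $t_{k}$ moves only elements of the box $\mathcal{B}_{k}$. The boxes are pairwise disjoint, so the $t_{k}$ commute with one another, and $t_{k}$ commutes with $s_{i}$ whenever $k\neq i$. It follows that for every integer $n$
\[
T^{n}s_{i}T^{-n}=t_{i}^{n}s_{i}t_{i}^{-n}.
\]
By part 3 of Lemma \ref{chido}, this is the consecutive orbital $2$-cycle $(x^{i}_{n}\ x^{i}_{n+1})$. Thus every consecutive orbital $2$-cycle of the box $\ibox$ lies in $\langle s_{i},T\rangle$.

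\textbf{Step 2: recovering $t_{i}$.} The cycle $t_{i}=(x^{i}_{1}\ \cdots\ x^{i}_{\alpha_{i}})$ is an essential orbital cycle. By part 1 of Lemma \ref{chido} it is a product of the orbital $2$-cycles $(x^{i}_{j}\ x^{i}_{j+1})$, and these are already consecutive. So $t_{i}\in\langle s_{i},T\rangle\subseteq\langle V,T\rangle$. Alternatively, the consecutive transpositions generate the whole symmetric group $\langle s_{i},t_{i}\rangle\cong\Sym(\alpha_{i})$ of part 6, which also contains $t_{i}$.

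\textbf{Degenerate boxes.} Boxes with $\alpha_{i}=1$ must be handled separately. There $t_{i}$ is the identity and $s_{i}$ is either trivial or not defined, so no generator needs to be recovered from them.

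The main obstacle is Step 1, specifically the identity $T^{n}s_{i}T^{-n}=t_{i}^{n}s_{i}t_{i}^{-n}$. This needs the orbital cycles of different boxes to have disjoint supports and hence to commute. That holds because every element of an orbital cycle has a stabilizer in the same conjugacy class $[H_{i}]$, so the cycle fixes everything outside $\ibox$.

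Once this is checked, the reduction closes as follows. The original generating set of the previous theorem is contained in $\langle V,T\rangle$, and conversely $T$ lies in the original generated group. Therefore the two groups are equal, which gives the statement.
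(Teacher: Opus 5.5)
Your proposal is correct and follows essentially the same argument as the paper. You reduce to the preceding theorem, use disjointness of boxes to get $T^n s_i T^{-n}=t_i^n s_i t_i^{-n}$, and multiply the resulting consecutive orbital transpositions to recover $t_i$ (the index in part 3 of Lemma \ref{chido} is off by one, since in fact $t_i^n s_i t_i^{-n}=(x^i_{n+1}\ x^i_{n+2})$ as the paper writes in this proof, but that is immaterial because all consecutive transpositions still arise).
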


\begin{proof}
Let $B$ be the set of all generators of the form $(x\rightarrow n^{j}_{i_j}\cdot x)$, and let
$T := \prod_{k=1}^{r} t_k$.
Define
$H := \langle B \cup \{T\} \cup \{s_1,\dots,s_r\} \rangle$.
By Theorem 12 we know that
\[
\AutG = \langle B \cup \{t_1,\dots,t_r\} \cup \{s_1,\dots,s_r\} \rangle.
\]
Therefore, it suffices to prove that $t_j \in H$ for each $j\in[r]$.

Fix $j\in[r]$. Since $T$ commutes with all $t_k$ for $k\neq j$, for any $k\in \mathbb{Z}$ we have
\[
T^k s_j T^{-k} = t_j^k s_j t_j^{-k}.
\]
If $s_j = (x_1^j\ x_2^j)$ and $t_j = (x_1^j\ x_2^j\ \cdots\ x_{\alpha_j}^j)$, then
\[
t_j^k s_j t_j^{-k} = (x_{k+1}^j\ x_{k+2}^j)
\]
(where indices are taken modulo $\alpha_j$). In particular, for $k=0,1,\dots,\alpha_j-2$, these elements are the consecutive transpositions
\[
(x_1^j\ x_2^j),\ (x_2^j\ x_3^j),\ \dots,\ (x_{\alpha_j-1}^j\ x_{\alpha_j}^j).
\]
The product of these consecutive transpositions is exactly the cycle
\[
(x_1^j\ x_2^j\ \cdots\ x_{\alpha_j}^j) = t_j.
\]
Therefore,
\[
t_j = \prod_{k=0}^{\alpha_j-2} T^k s_j T^{-k}.
\]
Since $T, s_j \in H$, each factor $T^k s_j T^{-k}$ belongs to $H$, and consequently $t_j\in H$.

This shows that all $t_j$ lie in $H$. Since $B\subseteq H$ and $s_j\in H$, we have
\[
\AutG = \langle B \cup \{t_1,\dots,t_r\} \cup \{s_1,\dots,s_r\} \rangle \subseteq H.
\]
The inclusion $H\subseteq \AutG$ is immediate by definition. Hence $H=\AutG$, which is exactly the desired equality.
\end{proof}

\begin{theorem}\label{rank}
Given a finite set $X$ and a group $G$ acting on it, it holds that 
$$\AutG=\langle \{(x\rightarrow n^{j}_{i_{j}}\cdot x),s_{j},(x_{1}\rightarrow n_{1}^{1}\cdot x_{1})\prod_{k=1}^{r}t_{k}|\ j\in[r],\ i_{j}\in[\alpha_{j}]\}\setminus \{(x_{1}\rightarrow n_{1}^{1}\cdot x_{1})\} \rangle.$$
\end{theorem}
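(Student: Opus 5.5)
The plan is to set $H':=\langle (x\rightarrow n^{j}_{i_j}\cdot x)\ (\text{all except } \eta_1^1),\ s_1,\dots,s_r,\ U\rangle$ with $U:=\eta_1^1T$, where $\eta_1^1=(x_1\rightarrow n_1^1\cdot x_1)$ and $T=\prod_{k=1}^r t_k$. By the preceding theorem (the one with $T$ as a single generator), it suffices to show $\eta_1^1\in H'$. Then $T=(\eta_1^1)^{-1}U\in H'$, and the equivalence $\langle V,t,s\rangle=\langle V,ts\rangle\iff t,s\in\langle V,ts\rangle$ gives the equality. Throughout I will use the box decomposition lemma: elements supported on different boxes commute. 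In particular, $U$ factors as $(\eta_1^1t_1)\cdot\prod_{k\ge 2}t_k$ into commuting pieces, one per box.

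\textbf{Step 1: recover $t_j$ for $j\ge 2$.} Since $\eta_1^1$ and $t_1$ fix $\mathcal{B}_j$ pointwise, the computation in the preceding proof carries over verbatim:
\[
U^ks_jU^{-k}=t_j^ks_jt_j^{-k}=(x^j_{k+1}\ x^j_{k+2}).
\]
Taking the product over $k=0,\dots,\alpha_j-2$ gives $t_j\in H'$. Consequently
\[
V:=U\prod_{k\ge2}t_k^{-1}=\eta_1^1t_1\in H',
\]
which is supported on $\mathcal{B}_1$ only.

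\textbf{Step 2: split $V$.} If $\alpha_1=1$, then $t_1$ is the identity and $V=\eta_1^1$, so we are done. If $\alpha_1\ge 3$, I use the wreath-product description $\AutG\cong\prod (N_G(H_i)/H_i)\wr S_{\alpha_i}$ of the box $\mathcal{B}_1$. Then $V$ corresponds to the base element with $n_1^1$ in coordinate $1$, followed by the cycle $t_1$. Conjugating, $Vs_1V^{-1}$ is a ``twisted'' orbital transposition that swaps $Gx_2$ and $Gx_3$ with a twist by $n_1^1$ on one side. Multiplying by the untwisted transposition $(x_2\ x_3)=Vs_1V^{-1}$ only up to twist, which I obtain instead as $t_1s_1t_1^{-1}$ expressed through $s_1$ and conjugation by $V$ on the untouched coordinates, should leave a diagonal element. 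This diagonal element is a translation by $n_1^1$ on one orbit, possibly combined with $(n_1^1)^{-1}$ on another. Conjugating it back by orbital transpositions, which lie in $H'$ by Lemma \ref{chido}, and using the commutation Claim for elements of the form $(x\ p)(x\rightarrow n\cdot x)(x\ p)$, I expect to isolate $(x_1\rightarrow n_1^1\cdot x_1)$. Then $t_1=(\eta_1^1)^{-1}V\in H'$.

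\textbf{Main obstacle.} The hard case is $\alpha_1=2$, and to a lesser extent keeping track of the twists when $N_G(H_1)/H_1$ is nonabelian. When $\alpha_1=2$ we have $t_1=s_1$, so $V=\eta_1^1s_1$, and $(Vs_1)=\eta_1^1$ directly, since $s_1\in H'$. This means the case $\alpha_1=2$ is actually trivial: $\eta_1^1=Vs_1^{-1}$. More generally, I would first try the uniform identity
\[
\eta_1^1=V\,t_1^{-1},
\]
with $t_1$ written as a product of conjugates of $s_1$. For this, the main check is whether $t_1$ can be produced from $s_1$ and conjugation by $V$ alone. Concretely, I would verify that $V^ks_1V^{-k}$, for $k<\alpha_1-1$, avoids the twisted coordinate. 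This holds if I choose the cycle direction so that position $1$ is reached last, and in that case the consecutive transpositions appear untwisted, exactly as in Step 1. This index bookkeeping is the step I expect to need the most care; if it fails, I fall back on the twisted-transposition argument above.
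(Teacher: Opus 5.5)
Your final argument (the ``uniform identity'') is correct, and it takes a genuinely different route from the paper. State the key point rather than hedging it. Under the paper's right-to-left composition, $V=\eta_1^1t_1$ satisfies $V(g\cdot x^1_i)=g\cdot x^1_{i+1}$ for $i<\alpha_1$ and $V(g\cdot x^1_{\alpha_1})=gn^1_1\cdot x^1_1$, so the only twisted step is $Gx^1_{\alpha_1}\to Gx^1_1$. For $0\le k\le\alpha_1-2$, $V^{-k}$ carries $Gx^1_{k+1},Gx^1_{k+2}$ to $Gx^1_1,Gx^1_2$ without crossing that step. Every other orbit of $\mathcal{B}_1$ goes into some $Gx^1_m$ with $m\ge 3$, which $s_1$ fixes. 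Hence $U^ks_1U^{-k}=V^ks_1V^{-k}=(x^1_{k+1}\ x^1_{k+2})$, untwisted. So the proof of the preceding theorem goes through with $U$ in place of $T$ for all boxes at once; your Step 1 and the cases $\alpha_1=1,2$ are subsumed. This gives $T\in H'$ and then $\eta_1^1=UT^{-1}$.

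The paper instead tries to isolate $\eta_1^1$ before recovering $T$. It uses the product $(\eta_1^1T)\bigl(s_1(\eta_1^1T)s_1^{-1}\bigr)^{-1}$ and then strips off the 3-cycle $c=t_1s_1t_1^{-1}s_1^{-1}$. As written, it drops the factor $(s_1\eta_1^1s_1^{-1})^{-1}$, which it also assumes is a retained generator. Computed correctly, the product is $\eta_1^1(t_1s_1t_1^{-1})(\eta_1^1)^{-1}s_1^{-1}$. For $\alpha_1\ge3$ this is just $c$, since $t_1s_1t_1^{-1}=(x^1_2\ x^1_3)$ commutes with $\eta_1^1$, so it carries no information about $\eta_1^1$. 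The paper's justification of $c\in H$ also appeals to $T$, which is not yet known to lie in $H$. Your route costs only the twist bookkeeping, uses no element of $B\setminus\{\eta_1^1\}$, and actually closes the argument.

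Two fixes for the write-up:
\begin{itemize}
\item Delete the twisted-transposition fallback. Its premise is false: $Vs_1V^{-1}=(x^1_2\ x^1_3)$ is untwisted.
\item Replace ``choose the cycle direction'' with the explicit convention $\eta_1^1T=\eta_1^1\circ T$; the cycle $t_1$ is fixed by the statement, so there is no direction to choose.
\end{itemize}
That convention is essential. If $\eta_1^1$ were applied first, the twist would sit on $Gx^1_1\to Gx^1_2$, inside the support of $s_1$, and the trick fails. In fact, take a single box with three orbits and $N_G(H_1)/H_1\cong C_2$. Then $s_1$ and $t_1\circ\eta_1^1$ generate only a subgroup of order $12$ in $\AutG\cong C_2\wr S_3$, which has order $48$.
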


\begin{proof}
Let $B$ be the set of all generators of the form $(x\rightarrow n^{j}_{i_j}\cdot x)$, and let
$\eta_1^1 := (x_1 \to n_1^1 \cdot x_1)$.
Let $T := \prod_{k=1}^{r} t_k$ and define
\[
H := \langle \{B\setminus\{\eta_1^1\}\} \cup \{ \eta_1^1 T \} \cup \{s_1,\dots,s_r\} \rangle.
\]
By Theorem 13 we know that
\[
\AutG = \langle B \cup \{T\} \cup \{s_1,\dots,s_r\} \rangle.
\]
It suffices to prove that $\eta_1^1 \in H$ and $T \in H$ to conclude that $H=\AutG$.

To this end, write $T = t_1 U$, where $U = \prod_{k=2}^r t_k$. Note that $\eta_1^1$ and $s_1$ commute with $U$. Let
\[
\eta_1^{1,(2)} := s_1 \eta_1^1 s_1^{-1},
\]
which belongs to $B\setminus\{\eta_1^1\}\subset H$. Then, from the definition of $H$, we have $\eta_1^1 T \in H$ and $\eta_1^{1,(2)} \in H$.

Observe that
\[
\eta_1^1 T = \eta_1^1 t_1 U.
\]
On the other hand,
\[
s_1 (\eta_1^1 T) s_1^{-1} = \eta_1^{1,(2)} (s_1 t_1 s_1^{-1}) U.
\]
Multiplying the first expression by the inverse of the second (and using that $U$ commutes with $\eta_1^1$ and $s_1$), we obtain:
\[
(\eta_1^1 T) \bigl( s_1 (\eta_1^1 T) s_1^{-1} \bigr)^{-1}
= \eta_1^1 t_1 (s_1 t_1^{-1} s_1^{-1})
= \eta_1^1 c,
\]
where $c := t_1 s_1 t_1^{-1} s_1^{-1}$.

Now, $c$ is a permutation acting only on the box $\mathcal B_1$, and a direct calculation shows that
\[
c = t_1 s_1 t_1^{-1} s_1^{-1} = (x_1^1\ x_3^1\ x_2^1)
\]
(if $\alpha_1=1$, then $c=\mathrm{id}$; if $\alpha_1=2$, then $c=s_1$). Therefore, $c$ is generated by $s_1$ and $t_1$, and by Theorem 13 (or more concretely, by the decomposition of $t_1$ in terms of $T$ and $s_1$), we have $c \in H$.

Thus $\eta_1^1 c \in H$ and $c \in H$, which implies
\[
\eta_1^1 = (\eta_1^1 c) c^{-1} \in H.
\]

Now, from $\eta_1^1 T \in H$ and $\eta_1^1 \in H$, we get
\[
T = (\eta_1^1)^{-1} (\eta_1^1 T) \in H.
\]

Therefore, $H$ contains $B\setminus\{\eta_1^1\}$, $\eta_1^1$, $T$, and all $s_j$. By Theorem 13, this implies $\AutG \subseteq H$. The reverse inclusion is trivial, hence $H=\AutG$.
\end{proof}

Based on all the constructions above, the upper bound of the theorem is obtained by counting the elements of the generating set constructed in Theorem \ref{rank}. 
This set consists of:
\begin{enumerate}
\item The base generators \((x\rightarrow n^{j}_{i_j}\cdot x)\), except one, \((x_1\rightarrow n_1^1\cdot x_1)\), which has been merged into a single element with the product of the cycles.
\item The product \(T=\prod_{k=1}^r t_k\), which replaces all generators \(t_i\).
\item The transpositions \(s_j\), which exist only when the corresponding box has at least two orbits.
\end{enumerate}
Counting now:
Initially there are \(\sum_{[H]\in\ConjX} \rank(N_G(H)/H)\) base generators. Among them, those corresponding to trivial quotients (\(N_G(H)/H\cong 1\)) are the identity and are omitted, which subtracts \(\omega_G(X)\). Furthermore, Theorem 14 removes an additional generator \((x_1\to n_1^1\cdot x_1)\) by merging it with \(T\); this adjustment, along with the special cases where this generator is already redundant, gives rise to the terms \(\Lambda_G(X)\) and \(\delta_G(X)\) in the formula. 
On the other hand, the number of transpositions \(s_j\) is \(|\ConjX|-\kappa_G(X)\), since there is one for each box with at least two orbits. Finally, the contribution of the orbital cycles reduces to a single generator, \(T=\prod_{k=1}^r t_k\), instead of the original \(r\) generators \(t_i\). 
Thus we obtain the upper bound
\[
|\ConjX|-\kappa_G(X)+\Lambda_G(X)-\omega_G(X)-\delta_G(X)+\sum_{[H]\in\ConjX}\rank(N_G(H)/H).
\]

\subsection{Improving the upper bound}

In this section we present a result and an algorithm that allow reducing the number of generators of the base 
$\{\eta^{i}_{k_{i}} \mid i\in[r],\, k_i\in[r_i]\}$, 
whose cardinality is $\sum_{[H]\in\ConjX} \rank(N_G(H)/H)$, 
by grouping those whose orders are pairwise coprime.

\begin{lemma}\label{pegado}
Let $\eta^{i}_{p}$ and $\eta^{j}_{q}$ be two base generators with $i\neq j$, whose orders $m:=o(\eta^{i}_{p})$ and $n:=o(\eta^{j}_{q})$ are coprime. Then
\[
\langle \eta^{i}_{p},\, \eta^{j}_{q} \rangle
=
\langle \eta^{i}_{p}\eta^{j}_{q} \rangle.
\]
\end{lemma}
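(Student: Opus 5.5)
The plan is to show that $\eta^{i}_{p}$ and $\eta^{j}_{q}$ commute, and then use the Chinese Remainder Theorem to recover each of them as a power of their product. The inclusion $\langle \eta^{i}_{p}\eta^{j}_{q}\rangle \subseteq \langle \eta^{i}_{p},\eta^{j}_{q}\rangle$ is trivial, so all the work goes into the reverse inclusion.

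\emph{Step 1: commutation.} Since $i\neq j$, the map $\eta^{i}_{p}=(x_i\rightarrow n^{i}_{p}\cdot x_i)$ is the identity outside the orbit $Gx_i\subseteq \ibox$. Likewise $\eta^{j}_{q}$ is the identity outside $Gx_j\subseteq \mathcal{B}_{j}$. The boxes $\ibox$ and $\mathcal{B}_{j}$ are disjoint, so the supports are disjoint. Each map also preserves its own support: $\eta^{i}_{p}$ sends $h\cdot x_i$ to $hn^{i}_{p}\cdot x_i\in Gx_i$. I will check $\eta^{i}_{p}\eta^{j}_{q}(z)=\eta^{j}_{q}\eta^{i}_{p}(z)$ directly in three cases: $z\in Gx_i$, $z\in Gx_j$, and $z$ outside both orbits. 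This is the same disjoint-support argument used for the decomposition $\tau=\tau_1\cdots\tau_r$ earlier in the section.

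\emph{Step 2: order of the product.} Set $\theta:=\eta^{i}_{p}\eta^{j}_{q}$. Because the two factors commute, $\theta^{k}=(\eta^{i}_{p})^{k}(\eta^{j}_{q})^{k}$ for every $k$. The two powers act on disjoint invariant supports, so $\theta^{k}=\mathrm{id}$ if and only if $(\eta^{i}_{p})^{k}=\mathrm{id}$ and $(\eta^{j}_{q})^{k}=\mathrm{id}$. That happens exactly when $m\mid k$ and $n\mid k$, so $o(\theta)=mn$ by coprimality. This step is not strictly needed, but it shows $\langle\theta\rangle\cong C_m\times C_n$.

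\emph{Step 3: recovering the factors.} Since $\gcd(m,n)=1$, choose integers $a,b$ with $an+bm=1$. Then
\[
\theta^{an}=(\eta^{i}_{p})^{an}(\eta^{j}_{q})^{an}=(\eta^{i}_{p})^{an}=(\eta^{i}_{p})^{1-bm}=\eta^{i}_{p},
\]
and symmetrically $\theta^{bm}=\eta^{j}_{q}$. Hence both generators lie in $\langle\theta\rangle$, which gives the equality.

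The only step that needs real care is Step 1. It relies on $\eta^{i}_{p}$ being well defined and supported on a single orbit. This in turn needs $n^{i}_{p}\in N_G(H_i)$, which is guaranteed by the choice of representatives, and it needs the orbits $Gx_i$ and $Gx_j$ to be distinct. The latter holds because $i\neq j$ means they lie in different boxes. Everything after that is the standard fact that commuting elements of coprime orders generate a cyclic group generated by their product.
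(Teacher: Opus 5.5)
Your proposal is correct and is essentially the paper's proof: the two generators commute because they are supported in distinct boxes, and a B\'ezout identity ($an+bm=1$, matching the paper's choice $u=bn$, $v=am$) recovers each generator as a power of the product. Your Step~2 computing $o(\theta)=mn$ is not needed for the equality, and your explicit check that each support is invariant makes the commutation step more careful than the paper's one-line justification.
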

\begin{proof}
Since $i\neq j$, the generators $\eta^{i}_{p}$ and $\eta^{j}_{q}$ act on distinct boxes, hence commute. Let $P := \eta^{i}_{p}\eta^{j}_{q}$. Clearly $P$ belongs to the subgroup generated by $\eta^{i}_{p}$ and $\eta^{j}_{q}$, so $\langle P \rangle \subseteq \langle \eta^{i}_{p}, \eta^{j}_{q} \rangle$.

For the reverse inclusion, we will show that both $\eta^{i}_{p}$ and $\eta^{j}_{q}$ are powers of $P$. Since $\gcd(m,n)=1$, there exist integers $u$ and $v$ such that:
\[
u \equiv 1 \pmod{m}, \qquad u \equiv 0 \pmod{n},
\]
\[
v \equiv 0 \pmod{m}, \qquad v \equiv 1 \pmod{n}.
\]
(These exist by the Chinese remainder theorem; explicitly, if $am+bn=1$, we may take $u = b n$ and $v = a m$.)

Then, using that $\eta^{i}_{p}$ and $\eta^{j}_{q}$ commute, we have
\[
P^u = (\eta^{i}_{p}\eta^{j}_{q})^u
= (\eta^{i}_{p})^u (\eta^{j}_{q})^u
= \eta^{i}_{p} \cdot \mathrm{id}
= \eta^{i}_{p},
\]
since $(\eta^{i}_{p})^u = \eta^{i}_{p}$ (as $u\equiv 1 \pmod{m}$) and $(\eta^{j}_{q})^u = \mathrm{id}$ (as $u\equiv 0 \pmod{n}$).

Similarly,
\[
P^v = (\eta^{i}_{p})^v (\eta^{j}_{q})^v
= \mathrm{id} \cdot \eta^{j}_{q}
= \eta^{j}_{q}.
\]
Therefore, $\eta^{i}_{p}$ and $\eta^{j}_{q}$ belong to $\langle P \rangle$, so $\langle \eta^{i}_{p}, \eta^{j}_{q} \rangle \subseteq \langle P \rangle$. Combining both inclusions gives the desired equality.
\end{proof}

\begin{corollary}\label{cor:pegado-conmutante}
Under the same hypotheses of Lemma \ref{pegado}, if $B$ is any set of generators that commute with $\eta^{i}_{p}$ and $\eta^{j}_{q}$, then
\[
\langle \{\eta^{i}_{p},\, \eta^{j}_{q}\} \cup B \rangle
=
\langle \{\eta^{i}_{p}\eta^{j}_{q}\} \cup B \rangle.
\]
In particular, this applies when $B$ consists of base generators $\eta^{l}_{k_l}$ with $l\in[r]\setminus\{i,j\}$.
\end{corollary}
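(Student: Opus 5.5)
Let $P:=\eta^{i}_{p}\eta^{j}_{q}$. The inclusion $\langle \{P\}\cup B\rangle\subseteq \langle \{\eta^{i}_{p},\eta^{j}_{q}\}\cup B\rangle$ is immediate, since $P$ is a product of two generators of the right-hand group. So the work is in the reverse inclusion, and for that it suffices to show that $\eta^{i}_{p}$ and $\eta^{j}_{q}$ both lie in $\langle \{P\}\cup B\rangle$.

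For this I would invoke Lemma \ref{pegado} directly. It gives $\langle \eta^{i}_{p},\eta^{j}_{q}\rangle=\langle P\rangle$, and more concretely $\eta^{i}_{p}=P^{u}$ and $\eta^{j}_{q}=P^{v}$, where $u,v$ are the Chinese-remainder integers fixed there. These identities involve only $P$, so both elements lie in $\langle P\rangle\subseteq\langle\{P\}\cup B\rangle$. Then
\[
\langle \{\eta^{i}_{p},\eta^{j}_{q}\}\cup B\rangle\subseteq\langle\{P\}\cup B\rangle,
\]
and the equality follows. The commuting hypothesis on $B$ is in fact not needed for this argument: it is harmless, and it only reflects the context in which the corollary is applied.

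For the ``in particular'' clause, I would check that each base generator $\eta^{l}_{k_l}$ with $l\notin\{i,j\}$ commutes with $\eta^{i}_{p}$ and $\eta^{j}_{q}$. Each such map is the identity outside a single box. Here $\eta^{l}_{k_l}$ is supported on $\mathcal{B}_l$, and $\eta^{i}_{p}$, $\eta^{j}_{q}$ are supported on $\mathcal{B}_i$ and $\mathcal{B}_j$ respectively. Each map preserves its own box, and the boxes are pairwise disjoint, so maps with disjoint supports commute, exactly as in the proof of Lemma \ref{pegado}.

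I do not expect any real obstacle. The only point needing care is that the exponents $u,v$ depend only on the orders $m,n$, so the powers $P^u$ and $P^v$ are unaffected by the presence of $B$. Since this is a pure membership argument, no word in the mixed generators ever needs to be rearranged.
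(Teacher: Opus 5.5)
Your proof is correct and follows the paper's route: both derive the corollary directly from Lemma \ref{pegado}, because $\eta^{i}_{p}=P^{u}$ and $\eta^{j}_{q}=P^{v}$ lie in $\langle P\rangle\subseteq\langle\{P\}\cup B\rangle$, while the reverse inclusion is immediate since $P$ is a product of generators. You are also right that the commuting hypothesis on $B$ is not needed for this membership argument; the paper's appeal to commutativity is superfluous here.
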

This follows directly from Lemma \ref{pegado}, since all elements of $B$ commute with $\eta^{i}_{p}$, $\eta^{j}_{q}$, and their product $P$.

\begin{corollary}\label{cor:pegado-general}
Lemma \ref{pegado} extends to any finite number of generators $\eta^{i_1}_{k_{i_1}}, \eta^{i_2}_{k_{i_2}}, \dots, \eta^{i_m}_{k_{i_m}}$ with distinct $i_1,\dots,i_m$ and pairwise coprime orders. In that case,
\[
\langle \eta^{i_1}_{k_{i_1}},\, \eta^{i_2}_{k_{i_2}},\, \dots,\, \eta^{i_m}_{k_{i_m}} \rangle
=
\langle \eta^{i_1}_{k_{i_1}} \eta^{i_2}_{k_{i_2}} \cdots \eta^{i_m}_{k_{i_m}} \rangle.
\]
Moreover, if $B$ is a set of generators commuting with all $\eta^{i_t}_{k_{i_t}}$, then
\[
\langle \{\eta^{i_t}_{k_{i_t}} \mid t=1,\dots,m\} \cup B \rangle
=
\langle \{\prod_{t=1}^m \eta^{i_t}_{k_{i_t}}\} \cup B \rangle.
\]
\end{corollary}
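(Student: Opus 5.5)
The plan is to prove Corollary \ref{cor:pegado-general} by induction on $m$, using Lemma \ref{pegado} as the engine. The one fact needed beyond the lemma is this: since $i_1,\dots,i_m$ are distinct, the generators $\eta^{i_t}_{k_{i_t}}$ act on pairwise different boxes $\mathcal{B}_{i_t}$ and are the identity elsewhere. Hence they pairwise commute, and any product of a subfamily acts on the union of the corresponding boxes.

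For the first equality, set $P_{m-1}:=\prod_{t=1}^{m-1}\eta^{i_t}_{k_{i_t}}$. Since the factors commute, the order of $P_{m-1}$ is $\mathrm{lcm}$ of their orders, which by pairwise coprimality is the product $o(\eta^{i_1}_{k_{i_1}})\cdots o(\eta^{i_{m-1}}_{k_{i_{m-1}}})$. This product is coprime to $o(\eta^{i_m}_{k_{i_m}})$. By the induction hypothesis, $\langle \eta^{i_1}_{k_{i_1}},\dots,\eta^{i_{m-1}}_{k_{i_{m-1}}}\rangle=\langle P_{m-1}\rangle$.

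Lemma \ref{pegado} is stated only for two base generators on distinct boxes. Its proof, however, uses only that the two elements commute and have coprime orders. I would therefore apply the same Chinese remainder argument verbatim to the commuting pair $P_{m-1}$ and $\eta^{i_m}_{k_{i_m}}$. That gives $\langle P_{m-1},\eta^{i_m}_{k_{i_m}}\rangle=\langle P_{m-1}\eta^{i_m}_{k_{i_m}}\rangle$, and chaining the two equalities yields the claim for $m$. Alternatively, one can avoid induction: take $u_s$ with $u_s\equiv 1$ modulo the order of the $s$-th factor and $u_s\equiv 0$ modulo the others. Then the $u_s$-th power of the full product equals the $s$-th factor, which shows directly that each generator lies in the cyclic group.

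For the second statement, with $B$ commuting with every $\eta^{i_t}_{k_{i_t}}$, the first part already gives the result, since generated subgroups depend only on the generated sets. Concretely, $\langle S\cup B\rangle=\langle \langle S\rangle\cup B\rangle$ for any subset $S$. Replacing $\langle S\rangle$ by $\langle\prod_t\eta^{i_t}_{k_{i_t}}\rangle$ gives the equality. The commuting hypothesis on $B$ is not actually needed, though it does no harm, mirroring Corollary \ref{cor:pegado-conmutante}.

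The only step needing care is the extension of Lemma \ref{pegado} from two base generators to a base generator and a product of base generators. This requires checking that the order of a commuting product of pairwise-coprime-order elements is the product of the orders. That holds because the group of such a product is the direct product of the cyclic groups, one for each disjoint box.
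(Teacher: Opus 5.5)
Your proposal is correct and follows the paper's route: you merge the generators one at a time via Lemma \ref{pegado}, using that the partial product of commuting elements on distinct boxes has order coprime to the order of the next generator. You add two accurate observations that slightly sharpen the paper's argument. First, the lemma's CRT argument needs only a commuting pair of coprime orders, which the paper uses tacitly when it merges a product rather than a base generator. Second, the hypothesis on $B$ is superfluous because $\langle S\cup B\rangle=\langle\langle S\rangle\cup B\rangle$, whereas the paper invokes Corollary \ref{cor:pegado-conmutante} at each step.
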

This follows by applying Lemma \ref{pegado} iteratively: first merge $\eta^{i_1}_{k_{i_1}}$ and $\eta^{i_2}_{k_{i_2}}$, then the result with $\eta^{i_3}_{k_{i_3}}$, and so on. In each step, the order of the partial product is the product of the orders of the factors (since they are pairwise coprime), so it remains coprime to the order of the next generator. The extension to a commuting set $B$ is obtained by applying Corollary \ref{cor:pegado-conmutante} at each step.\\

\textbf{Important remark.} 
The lemma and its corollaries only apply to generators that commute with each other. In particular, the generators $t_i$ and $s_i$ do not commute with $\eta^{i}_{k_i}$ (since they permute the orbits), so they cannot be absorbed into the product of a packet. The grouping algorithm presented below only affects the base generators $\eta^{i}_{k_i}$; the generators $t_i$ and $s_i$ remain as independent elements in the final generating set.

\medskip
\noindent
\textbf{Definition of \(\lambda_G^{\min}(X)\).}
Given a finite \(G\)-set \(X\), consider the set of non-trivial base generators
\[
\mathcal{B} := \{ \eta^{i}_{k_i} \mid i\in[r],\, k_i\in[r_i],\, o(\eta^{i}_{k_i}) > 1 \}.
\]
A \emph{packet} is a subset of \(\mathcal{B}\) satisfying:
\begin{enumerate}
\item[(i)] all its elements have pairwise coprime orders;
\item[(ii)] it does not contain two generators with the same superscript \(i\).
\end{enumerate}
A \emph{partition into packets} of \(\mathcal{B}\) is a collection of disjoint packets whose union is \(\mathcal{B}\). We denote by
\[
\lambda_G^{\min}(X)
\]
the minimum number of packets in a partition of \(\mathcal{B}\) satisfying conditions (i) and (ii).

Since \(\mathcal{B}\) is finite, the minimum exists. Clearly,
\[
\lambda_G^{\min}(X) \le \sum_{[H]\in\ConjX} \rank(N_G(H)/H) - \omega_G(X),
\]
where $\omega_G(X)$ is the number of trivial generators (corresponding to quotients $N_G(H)/H\cong 1$) that were omitted from $\mathcal{B}$. Equality holds in the worst case, when no pair of non-trivial generators has coprime orders.\\

\textbf{Grouping algorithm.}\\
The procedure is as follows:

\begin{enumerate}
\item Fix an order to traverse the superscripts $i\in[r]$ (e.g., $1,2,\dots,r$). 
\item Take the first available generator $\eta^{i}_{k_i}$ (starting with $i=1$) and place it in a new packet $P_1$.
\item Traverse the remaining superscripts in the fixed order. For each superscript $j$, if there exists a generator $\eta^{j}_{t}$ that has not been used and whose order is coprime to the orders of all elements in $P_1$, add it to $P_1$; otherwise, continue to the next superscript.
\item Repeat the previous step until all superscripts have been traversed without being able to add more elements to $P_1$.
\item Once $P_1$ is complete, choose a new generator $\eta^{j}_{t'}$ that is not in any already formed packet, and repeat the process to build $P_2$.
\item Continue until all non-trivial generators have been assigned to a packet.
\end{enumerate}

This procedure produces a partition of the set of non-trivial generators into packets, each with properties (i) and (ii) from the definition above. Property (i) ensures that the product of the elements of a packet has order equal to the product of the individual orders, and thus generates the direct product of the corresponding cyclic subgroups. Property (ii) guarantees that generators in the same packet act on distinct boxes, so their direct products are compatible and do not introduce conflicts. Thus, replacing each packet by the product of its elements yields a new generating set for the base of the wreath product.\\

Replacing in the upper bound of Theorem 6 the term $\sum \rank(N_G(H)/H) - \omega_G(X) - \delta_G(X)$ by $\lambda_G^{\min}(X)$, we obtain a substantial improvement.

\begin{corollary}
Given a group $G$ acting on a finite set $X$, it holds that 
\[
\rank(\AutG) \le |\ConjX|-\kappa_G(X)+\Lambda_G(X)+\lambda_G^{\min}(X).
\]
\end{corollary}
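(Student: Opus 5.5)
The bound will follow from an explicit generating set of $\AutG$ of the stated size, built from the one in Theorem \ref{rank} (or, more transparently, Theorem 13) by replacing the base generators $\eta^{i}_{k_i}$ with packet products.

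\textbf{Step 1: fix the packets.} Fix a partition of the non-trivial base generators $\mathcal{B}$ into $\lambda_G^{\min}(X)$ packets $P_1,\dots,P_\lambda$. For each packet let $\pi_\ell:=\prod_{\eta\in P_\ell}\eta$. The trivial generators (those with $N_G(H)/H\cong 1$) are the identity and are discarded, since they contribute nothing.

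\textbf{Step 2: recover the base generators.} Distinct elements of a packet carry distinct superscripts, so they act on distinct boxes and commute pairwise. Their orders are pairwise coprime. Corollary \ref{cor:pegado-general} therefore gives $\langle \pi_\ell\rangle=\langle P_\ell\rangle$. Applying this packet by packet, the subgroup generated by $\{\pi_1,\dots,\pi_\lambda\}$ contains every $\eta^i_{k_i}$. Note that we do not need the packets to commute with each other, only the elements inside each packet. So
\[
\langle \pi_1,\dots,\pi_\lambda, T, s_j \rangle = \langle \mathcal{B}, T, s_j\rangle = \AutG,
\]
where the last equality is Theorem 13 with $T=\prod_k t_k$.

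\textbf{Step 3: count, which is the main obstacle.} The set above has $\lambda_G^{\min}(X)+1+|\{j: s_j\neq \mathrm{id}\}|$ elements. Here $s_j$ is needed only for boxes with $\alpha_j\ge 2$, of which there are $|\ConjX|-\kappa_G(X)$. Likewise $T$ is non-trivial only if some $\alpha_j\ge 2$.

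\emph{Case $\alpha_j\ge 3$ for some $j$.} Here $\Lambda_G(X)=1$, and the count is exactly $|\ConjX|-\kappa_G(X)+1+\lambda_G^{\min}(X)$, as required.

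\emph{Case all $\alpha_j\le 2$.} Then $t_j=s_j$ for the boxes with two orbits, and $t_j=\mathrm{id}$ otherwise. So $T$ is a product of the $s_j$ and may be dropped, which gives a count with $\Lambda_G(X)=0$.

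\emph{Degenerate case $\AutG=1$.} By the paper's rank convention we use $1$ generator. This matches $\Lambda_G(X)=1$, since then $|\ConjX|=\kappa_G(X)$ and all quotients are trivial.

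The delicate point is checking that these cases exhaust the definition of $\Lambda_G(X)$. In the remaining configuration (all boxes have one orbit but some quotient is non-trivial), $\Lambda_G(X)=0$, $T$ and all $s_j$ vanish, and the count is $\lambda_G^{\min}(X)$, which is consistent.

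Optionally, one could instead merge one packet product into $T$ as in Theorem \ref{rank}. This is not needed for the stated bound.
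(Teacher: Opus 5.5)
Your argument is correct, and its core is the same as the paper's: partition the non-trivial base generators into packets, replace each packet by its product, and use Corollary~\ref{cor:pegado-general} (the Chinese remainder argument inside a packet) to recover every $\eta^i_{k_i}$. Your remark that only commutation \emph{inside} a packet is needed is right. Base generators attached to the same box need not commute when $N_G(H)/H$ is non-abelian, so the ``moreover'' part of that corollary could not be invoked anyway.

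Where you differ is the bookkeeping. The paper gets the bound by substituting $\lambda_G^{\min}(X)$ for $\sum_{[H]}\rank(N_G(H)/H)-\omega_G(X)-\delta_G(X)$ in the upper bound of its main theorem. That implicitly starts from the merged set of Theorem~\ref{rank} and relies on the asserted inequality $\lambda_G^{\min}(X)\le\sum\rank(N_G(H)/H)-\omega_G(X)-\delta_G(X)$. You instead start from the unmerged set $\{\pi_\ell\}\cup\{T\}\cup\{s_j:\alpha_j\ge 2\}$ of Theorem 13 and check the count directly against each clause of $\Lambda_G(X)$. You drop $T$ when every box has at most two orbits, and you use the rank convention when $\AutG=1$.

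This is more than cosmetic. The paper's inequality fails whenever $\delta_G(X)=1$ and no two non-trivial generators can share a packet. For example, for $\mathbb{Z}_2$ acting regularly on itself, $\lambda_G^{\min}(X)=1$ while the right-hand side is $0$. In that case the main theorem's upper bound evaluates to $0<1=\rank(\AutG)$, whereas your direct count gives the correct value $1$.

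Your route also avoids any dependence on the merging step of Theorem~\ref{rank}, which you rightly flag as unnecessary for this statement.
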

Indeed, the new bound is always less than or equal to the previous one, since
\[
\lambda_G^{\min}(X) \le \sum_{[H]\in\ConjX} \rank(N_G(H)/H) - \omega_G(X) - \delta_G(X).
\]
The inequality is strict whenever it is possible to group at least two generators of coprime orders in the same packet.

\begin{example}
Consider a set $X$ with 21 elements such that 
$$N_{G}(H_{1})/H_{1}\cong \zz_{2}\times \zz_{2},$$
$$N_{G}(H_{2})/H_{2}\cong \zz_{3},$$ 
for which a diagram is generated as shown in Figure~\ref{figurados}.\\
\begin{figure}[ht]\label{figurados}
\centering
\includegraphics[width=2.5in]{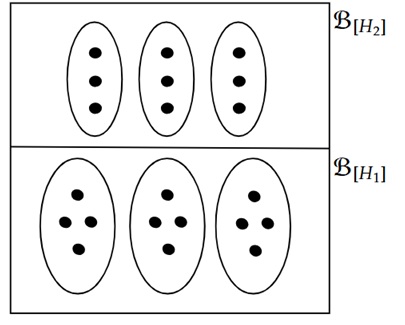}
\caption{A $G$-set}
\end{figure}
Using the results in \cite{paper3} we can count that for this particular $G$-set it holds that 
$$|\EndG| = 3^9 \cdot 7^3 = 6,751,269,$$
$$|\AutG| = 2^{8} \cdot 3^5 = 62,208.$$
Let $\{x_{1},x_{2},x_{3}\}$ and $\{y_{1},y_{2},y_{3}\}$ be the sets of orbit representatives for each box respectively, and denote 
$$\eta^{1}_{1}=(1,0),\quad \eta^{1}_{2}=(0,1),\quad \eta^{2}_{1}=1.$$
In the previous section it was shown that the set 
$$\{(y\rightarrow 1\cdot y),(x\rightarrow (1,0)\cdot x),(x\rightarrow (0,1)\cdot x),(y_{1}\ y_{2}\ y_{3}),(x_{1}\ x_{2}\ x_{3}),(x_{1}\ x_{2}),(y_{1}\ y_{2})\}$$
generates $\AutG$, implying that its rank is at most 5.

Now, the algorithm allows generating the packets
$$\{(y\rightarrow 1\cdot y),\ (x\rightarrow(1,0)\cdot x)\}\quad\text{and}\quad \{(x\rightarrow (0,1)\cdot x)\}.$$
Lemma \ref{pegado} implies that the set 
$$\{(y\rightarrow 1\cdot y)(x\rightarrow (1,0)\cdot x),\ (x\rightarrow (0,1)\cdot x),\ (y_{1}\ y_{2}\ y_{3}),\ (x_{1}\ x_{2}\ x_{3}),\ (x_{1}\ x_{2}),\ (y_{1}\ y_{2})\}$$
also generates $\AutG$, showing that its rank is at most 4. 
\end{example}

\medskip
\noindent
\textbf{Open problem.}
The algorithm presented provides a practical way to improve the upper bound, but it leaves open the fundamental question of determining the exact value of \(\lambda_G^{\min}(X)\) in terms of intrinsic properties of the action. 

In essence, the bound
\[
\rank(\AutG) \le |\ConjX|-\kappa_G(X)+\Lambda_G(X)+\lambda_G^{\min}(X)
\]
should be the exact value of the rank of \(\AutG\); however, the lack of an explicit expression for \(\lambda_G^{\min}(X)\) makes it difficult to improve the lower bound proved in the previous section and, consequently, to prove equality.

\section{Finiteness conditions in generating sets}

In this section we establish necessary conditions for the monoid $\EndG$ to be finitely generated. The following results extend the ideas of \cite{paper} to arbitrary $G$-sets.

\begin{definition}[Orbital infiltration]
Given subgroups \( H,K\leq G \) such that \( H\leq K \), we call a function \( \tau \in \EndG \) an \emph{orbital infiltration} of type \( (H,[K]_{N_{H}}) \) if it satisfies the following conditions:
\begin{enumerate}
\item \(\tau(\mathcal{B}_{[T]})\subseteq \mathcal{B}_{[T]}\), for all \([T]\in \ConjX\setminus\{[H],[K]\}\). 
\item \(\tau|_{\mathcal{B}_{[T]}}\) is an injective function from \(\mathcal{B}_{[T]}\) into itself, for all \([T]\in \ConjX\setminus\{[H],[K]\}\).
\item There exists \(x\in \Hbox\) such that \(\tau(x)\in \Kbox\).
\item If \(\tau(x)\notin \Hbox\), then \([G_{\tau(x)}]_{N_{H}}=[K]_{N_{H}}\), for all \(x\in \Hbox\).
\end{enumerate}
\end{definition}

The following theorem is a strong condition for the generating sets of $\EndG$ and is the cornerstone of this section. Its proof is an adaptation of the argument used in \cite[Theorem 17]{paper} for finite sets.

\begin{theorem}\label{thm:infiltraciones}
Given a set $X$ on which a group $G$ acts, every generating set of $\EndG$ contains an orbital infiltration of each possible type.  
\end{theorem}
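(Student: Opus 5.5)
The plan is to argue by indispensability. Fix a type $(H,[K]_{N_{H}})$ with $H\leq K$ both in $\StabG$ and $H\neq K$; this is the only meaningful case, since otherwise conditions 3 and 4 cannot hold simultaneously with the others. First I will show that at least one element of $\EndG$ of this type exists, namely the elementary collapsing $[x\mapsto y]$ with $G_{x}=H$ and $G_{y}=K$. It is $G$-equivariant by Lemma~\ref{lema1}. It is the identity outside $Gx$, so conditions 1 and 2 hold. It sends $x$ into $\Kbox$, and every point of $\Hbox$ that it moves lands in $G y$, whose stabilizers are conjugates $g^{-1}Kg$ with $g\in G_{x}\!\cdot$-compatible elements. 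More precisely, $[G_{g\cdot y}]_{N_H}=[K]_{N_H}$ for $g$ ranging over the relevant representatives; the careful bookkeeping here uses that $G_{g\cdot x}=H$ forces $g\in N_{H}$.

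Next, let $S$ be a generating set and write this $\tau$ as a product $\tau=\sigma_{1}\sigma_{2}\cdots\sigma_{m}$ with $\sigma_{j}\in S$. The key step is to show that some factor $\sigma_{j}$ is itself an orbital infiltration of the same type. I will use two structural facts about equivariant maps. First, $G_{z}\leq G_{\sigma(z)}$, so stabilizers can only grow along compositions. Second, a composite restricted to a box is injective into that box only if each factor, applied along the trajectory, neither collapses nor escapes the box.

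For conditions 1 and 2, take a box $\mathcal{B}_{[T]}$ with $[T]\neq[H],[K]$. Since $\tau$ is the identity there, every point of $\mathcal{B}_{[T]}$ has a trajectory under the partial products whose stabilizer weakly increases and returns to conjugacy class $[T]$. Hence it stays in $\mathcal{B}_{[T]}$ throughout; when $X$ is finite, or more generally when stabilizers cannot strictly increase and then return to the same conjugacy class, this follows. Injectivity of each factor on $\mathcal{B}_{[T]}$ would then follow by a counting or orbit argument. This is the step I expect to be the main obstacle for arbitrary (infinite) $X$. Counting fails there, and a proper subgroup can be conjugate to an overgroup, so I will need an argument that avoids finiteness. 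My first attempt is to handle only the factors that are actually evaluated along trajectories of the points in question. If that fails, I will instead choose the factor $\sigma_j$ minimally, as the first one that sends some point of $\Hbox$ out of $\Hbox$.

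With that choice, take $x\in\Hbox$ with $\tau(x)\in\Kbox$. Track the partial images $\sigma_{j}\cdots\sigma_{m}(x)$ and let $j$ be the first index at which the image leaves $\Hbox$. Its stabilizer $L$ satisfies $H<L\leq$ a conjugate of $K$, with equality forced because the final image lies in $\Kbox$ and stabilizers only increase. Then $\sigma_{j}$ maps a point of $\Hbox$ into $\Kbox$, which is condition 3. It remains to check condition 4, i.e.\ that every point of $\Hbox$ leaving $\Hbox$ under $\sigma_{j}$ lands in stabilizer class $[K]_{N_{H}}$ and not in some larger or incomparable class. For this I will argue by contradiction: if $\sigma_{j}$ pushed some $z\in\Hbox$ into a different class, I will choose the initial elementary collapsing $\tau$ so that $z$'s orbit is reached, and the monotonicity of stabilizers would then prevent $\tau$ from acting as prescribed. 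Conditions 1 and 2 for $\sigma_j$ follow from the analysis of the previous paragraph.
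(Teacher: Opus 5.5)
\textbf{There is a genuine gap, and for infinite $X$ it cannot be closed.}

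\textbf{The gap.} The step that fails is your claim that, at the first index $j$ where the trajectory of $x$ leaves $\Hbox$, the new stabilizer is conjugate to $K$ ``because the final image lies in $\Kbox$ and stabilizers only increase''. Write $\pi_k$ for the composite of the first $k$ factors applied. Monotonicity only gives $H\le G_{\pi_j(x)}\le K$. Nothing you say prevents the trajectory from stopping in an intermediate box $\mathcal{B}_{[J]}$, $[H]<[J]<[K]$, and being carried into $\Kbox$ by a later factor.

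The same ingredient is missing elsewhere:
\begin{itemize}
\item For conditions 1--2, following trajectories controls $\sigma_j$ only on the image of $\pi_{j-1}$, not on all of $\mathcal{B}_{[T]}$. Your worry about stabilizers returning to their class is harmless, since $G_z\le G_{\pi_k(z)}\le G_{\tau(z)}=G_z$ for $z\notin Gx$.
\item Your plan for condition 4 is circular: re-choosing $\tau$ changes the factorization and hence $\sigma_j$.
\end{itemize}

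\textbf{The fix for finite $X$ is a pigeonhole argument.}
\begin{itemize}
\item Since $\pi_k(a)=\pi_k(b)$ forces $\tau(a)=\tau(b)$, each $\pi_k$ is injective on $X\setminus Gx$ and preserves exact stabilizers there. So it permutes every box other than $\Hbox$.
\item If $\pi_k(x)$ lay in such a box, it would equal $\pi_k(w)$ for some $w$ in that box, and then $w=\tau(w)=\tau(x)=y$. Hence the first exit lands exactly on $\pi_j(y)$.
\item $\pi_{j-1}$ is injective, hence bijective, on $\Hbox$. A collision with $u\in\Hbox\setminus Gx$ would force $u\in Gy$. A collision inside $Gx$ would make $G_{\pi_{j-1}(x)}$ a conjugate of $H$ properly containing $H$, impossible at finite index.
\item Therefore $\sigma_j=\pi_j\pi_{j-1}^{-1}$, read box by box, satisfies conditions 1--4.
\end{itemize}

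\textbf{For infinite $X$ the statement is false.} Take $G=\mathbb{Z}_4$ and $J=\{0,2\}$. Let $X$ consist of a free orbit $Gx$, orbits $Gw_1,Gw_2,\dots$ with $G_{w_i}=J$, and a fixed point $y$. Define $\alpha(w_i)=w_{i+1}$, $\beta=[x\mapsto w_1]$, and $\gamma(w_1)=y$, $\gamma(w_{i+1})=w_i$, each extended equivariantly and by the identity elsewhere.
\begin{itemize}
\item Then $\gamma\beta\alpha=[x\mapsto y]$ is an orbital infiltration of type $(\{0\},[G]_{G})$.
\item Yet none of $\alpha,\beta,\gamma$ sends a point of $Gx$ (the box of $\{0\}$) to $y$.
\item Every infiltration of this type equals $\hat\theta\circ[x\mapsto y]$ with $\hat\theta$ fixing $Gx\cup\{y\}$.
\end{itemize}
So the elements of $\EndG$ that are not infiltrations of this type already generate $\EndG$.

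\textbf{Comparison with the paper.} The paper's proof passes over exactly this point without a valid argument. Its choice of factor, the first non-bijective factor of an arbitrary infiltration, is also worse than yours, even for finite $X$. For $[x_1\mapsto y]\circ[x_2\mapsto x_3]$, with $x_1,x_2,x_3\in\mathcal{B}_H$ in distinct orbits and $G_y=K$, that factor is $[x_2\mapsto x_3]$, which never leaves $\Hbox$. Your first-exit argument with the elementary collapsing, completed by the pigeonhole step, proves the result for finite $X$. A finiteness hypothesis (e.g.\ finite $X$) is genuinely needed.
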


\begin{proof}
Suppose, for contradiction, that there exists a type $(H,[K]_{N_H})$ such that no element of \(U\subseteq \EndG\) is an orbital infiltration of that type, but \(\EndG = \langle \AutG \cup U \rangle\). Let \(\tau\in\EndG\) be an orbital infiltration of type \((H,[K]_{N_H})\). Then \(\tau\) can be expressed as \(\tau = \rho_s \cdots \rho_1\), where each \(\rho_i \in \AutG \cup U\). Let \(t\) be the smallest index such that \(\rho_t\) is not bijective. The factors \(\rho_1,\dots,\rho_{t-1}\) are bijections, so they preserve stabilizers and boxes.

For any \([T]\neq[H],[K]\), the restriction \(\tau|_{\mathcal{B}_{[T]}}\) is injective. This forces \(\rho_t|_{\mathcal{B}_{[T]}}\) to be injective; otherwise, there would exist \(x\neq y\) in \(\mathcal{B}_{[T]}\) with \(\rho_t(x)=\rho_t(y)\), and then \(\tau(x)=\tau(y)\), contradicting the injectivity of \(\tau\). Moreover, \(\rho_t\) must preserve each \(\mathcal{B}_{[T]}\) with \([T]\neq[H],[K]\), since if there existed \(x\in\mathcal{B}_{[T]}\) such that \(\rho_t(x)\notin\mathcal{B}_{[T]}\), then, since \(G\)-equivariant functions respect the partial order of boxes (Lemma \ref{lema1}), the final image \(\tau(x)\) would not belong to \(\mathcal{B}_{[T]}\), contradicting condition (1) of the definition of orbital infiltration.

It remains to verify the conditions on the box \(\Hbox\). By condition (iii) of \(\tau\), there exists \(x_0\in\Hbox\) such that \(\tau(x_0)\in\Kbox\). Let \(x_0' = \rho_{t-1}\cdots\rho_1(x_0)\), which belongs to \(\Hbox\) since the previous factors are bijections. We claim that \(\rho_t(x_0')\in\Kbox\). If \(\rho_t(x_0')\) were in an intermediate box \([J]\) with \([H]\le [J] < [K]\), then for \(\tau(x_0)=\rho_s\cdots\rho_t(x_0')\in\Kbox\), the composition of the remaining factors \(\rho_{t+1}\cdots\rho_s\) would have to move elements from \(\mathcal{B}_{[J]}\) to \(\mathcal{B}_{[K]}\). But \(\tau\) is injective on \(\mathcal{B}_{[J]}\) (since \([J]\neq[H],[K]\)), and this injectivity is inherited from the restriction of \(\rho_t\) to the preimage of \(\mathcal{B}_{[J]}\). Consequently, \(\rho_t\) must already identify the fibers necessary for the final composition to be injective on \(\Hbox\), which forces \(\rho_t\) to satisfy condition (iv) of the definition of orbital infiltration: if \(\rho_t(x)\notin\Hbox\) for some \(x\in\Hbox\), then \([\rho_t(x)]_{N_H}=[K]_{N_H}\). Indeed, if \([\rho_t(x)]_{N_H}\) were an intermediate class \([J]_{N_H}\) with \([H]<[J]<[K]\), then by the injectivity of \(\tau\) on \(\mathcal{B}_{[J]}\), no later factor could modify that conjugacy class without creating a collision in the kernel of \(\tau\), contradicting that \(\rho_t\) is the first non-bijective factor and that \(\tau\) satisfies (iv). Hence \(\rho_t\) satisfies condition (iv). Moreover, by the previous discussion, \(\rho_t\) is injective and preserves boxes distinct from \([H]\) and \([K]\), and we have just proved that there exists \(x_0'\in\Hbox\) with \(\rho_t(x_0')\in\Kbox\). Hence \(\rho_t\) is an orbital infiltration of the same type \((H,[K]_{N_H})\). Since \(\rho_t\) is not bijective, it cannot belong to \(\AutG\), so \(\rho_t\in U\), contradicting the assumption that \(U\) contains no infiltrations of this type. We conclude that \(U\) must contain at least one orbital infiltration of each type.
\end{proof}

As a direct consequence, if there are infinitely many types of orbital infiltrations, then \(\EndG\) cannot be finitely generated.

\begin{corollary}\label{cor:orbitasinfinitas}
Let $X$ be a set such that there exists $H\in \StabG$ for which $G/\Hbox$ is infinite. Then $\AutG$ is not finitely generated, and consequently $\EndG$ is not finitely generated.
\end{corollary}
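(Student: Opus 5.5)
Let $H\in\StabG$ be such that the set of orbits $G/\Hbox$ is infinite. The argument has two parts: $\AutG$ surjects onto an infinite symmetric group, and the whole monoid reduces to the group of units on the box $\Hbox$.

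First we show that $\AutG$ is not finitely generated. By Corollary \ref{cor:cociente-sym}, the group $\prod_{[T]\in\ConjX}\Sym(G/\mathcal{B}_{[T]})$ is a quotient of $\AutG$. Composing with the projection onto the $[H]$ factor gives a surjective homomorphism $\pi_{[H]}:\AutG\to\Sym(G/\Hbox)$. Since $\Omega:=G/\Hbox$ is infinite, it suffices to show that $\Sym(\Omega)$ is not finitely generated. If $\Omega$ is uncountable this is immediate, because a finitely generated group is countable. If $\Omega$ is countably infinite, $\Sym(\Omega)$ still has cardinality $2^{\aleph_0}$, while any finitely generated group is countable. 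So in either case $\Sym(\Omega)$ is not finitely generated, and neither is $\AutG$, since images of finitely generated groups are finitely generated.

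Next we pass to the monoid $\EndG$. Suppose $\EndG=\langle U\rangle$ with $U$ finite. The key observation is that $\AutG$ is the group of units of $\EndG$, and its complement is an ideal. Indeed, if $\sigma=\rho_s\cdots\rho_1$ is bijective, then $\rho_1$ is injective and $\rho_s$ is surjective. We need every factor to be bijective, and this is where care is required: with infinitely many orbits, an injective equivariant map need not be surjective.

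The main obstacle is exactly this possibility of non-bijective factors. I would handle it by restricting attention to the box $\Hbox$. Set
\[
E:=\{\rho\in\EndG:\ \rho|_{\Hbox} \text{ is a bijection of } \Hbox\}.
\]
Define $\Phi:E\to\Sym(\Omega)$ by sending $\rho$ to the permutation of $\Omega$ it induces. Then consider the product $\sigma=\rho_s\cdots\rho_1$ of generators. By Lemma \ref{lema1}, equivariant maps never decrease stabilizers, so a point of $\Hbox$ can only be sent to $\Hbox$ or to a strictly larger box. Suppose $\sigma(\Hbox)=\Hbox$ bijectively. Then every factor maps the intermediate image of $\Hbox$ into $\Hbox$: once a point leaves $\Hbox$ it cannot return, since a conjugate of $H$ cannot strictly contain $H$ up to conjugacy in a way that returns to $[H]$. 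I would make this precise only in the situation where strict containment $H<gHg^{-1}$ cannot occur, for example when $H$ has finite index or when $[H]$ is minimal among the relevant stabilizers. Alternatively, one can avoid the issue by counting: every $\sigma\in\AutG$ is a word in the finite set $U$, so $\AutG$ is contained in the countable set of finite words over $U$. But $\AutG$ surjects onto $\Sym(\Omega)$, which is uncountable. This gives a contradiction without any analysis of the factors.

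So the clean route is cardinality. $\EndG$ is finitely generated only if it is countable, and it contains $\AutG$, which maps onto the uncountable group $\Sym(\Omega)$. Hence $\EndG$ is not finitely generated, and in particular neither is $\AutG$. I expect the only delicate point to be justifying that $\Sym(\Omega)$ is uncountable for countably infinite $\Omega$; this is the standard diagonal or binary-sequence argument, for instance by choosing independently, for each $n$, whether or not to swap the $n$-th disjoint pair of orbits.
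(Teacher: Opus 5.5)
Your argument is correct, and its core is the paper's: the set $\Omega$ of orbits in $\Hbox$ is infinite, so $\Sym(\Omega)$ is uncountable, while every finitely generated group or monoid is countable. The paper embeds $\Sym(\Omega)$ into $\AutG$ via $\sigma\mapsto\rho_\sigma$, whereas you project $\AutG$ onto $\Sym(\Omega)$ using Corollary \ref{cor:cociente-sym}. Either way $\AutG$ is uncountable, so this difference is immaterial.

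The real difference is the passage to $\EndG$, and there your route is the sounder one. The paper appeals to the principle that the group of units of a finitely generated monoid is finitely generated. That principle fails for general monoids. For example, the submonoid of transformations of $\mathbb{Z}[1/2]\times\mathbb{N}$ generated by $(x,k)\mapsto(x\pm1,k)$, $(x,k)\mapsto(2x,k+1)$ and $(x,k)\mapsto(x/2,\max(k-1,0))$ has unit group $\mathbb{Z}[1/2]$. The standard argument for the principle requires the non-units to form an ideal, and that fails in $\EndG$ here. Pick distinct orbits $Gx_1,Gx_2,\dots$ in $\Hbox$ with $G_{x_i}=H$, and let $\alpha(g\cdot x_i)=g\cdot x_{i+1}$. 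Let $\beta$ undo this shift and fix $Gx_1$, with both maps the identity elsewhere. These are non-units with $\beta\alpha=\mathrm{id}\neq\alpha\beta$. Your direct count closes the gap cleanly: $\EndG\supseteq\AutG$ and $\AutG$ is uncountable.

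Two fixes are needed in a write-up:
\begin{itemize}
\item Delete the exploratory middle paragraph. It asserts that the complement of $\AutG$ is an ideal, which is false here by the example above. It also relies on an unproved ``no return to $[H]$'' claim about stabilizers.
\item Fix your final sentence. Non-finite generation of $\AutG$ does not follow ``in particular'' from that of $\EndG$. It is what your first part already establishes, or what the uncountability of $\AutG$ gives directly.
\end{itemize}
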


\begin{proof}
If $G/\Hbox$ is infinite, the set of orbits $\mathcal{O} := \Hbox/G$ is infinite. The symmetric group $\Sym(\mathcal{O})$ is uncountable. Fixing a representative $x_i$ for each orbit $\mathcal{O}_i\in\mathcal{O}$, every permutation $\sigma\in\Sym(\mathcal{O})$ induces a $G$-equivariant bijection $\rho_\sigma:X\to X$ defined by
\[
\rho_\sigma(g\cdot x_i):= g\cdot x_{\sigma(i)},
\]
and acting as the identity on $X\setminus\Hbox$. The assignment $\sigma\mapsto \rho_\sigma$ is a group monomorphism, so $\Sym(\mathcal{O})$ is a subgroup of $\AutG$. If $\AutG$ were finitely generated, it would be countable, but it contains an uncountable subgroup, a contradiction. Hence $\AutG$ is not finitely generated. Consequently, $\EndG$ cannot be finitely generated either, since in a finitely generated monoid the group of units is finitely generated.
\end{proof}
\begin{corollary}
Let $G$ be a group and $X$ a $G$-set such that:
\begin{enumerate}
\item[(i)] \(|\ConjX|=\infty\);
\item[(ii)] For every \([H]\in\ConjX\setminus\{[G]\}\) (except possibly finitely many), there exists \([K]\in\ConjX\) such that \([H]<[K]\).
\end{enumerate}
Then \(\EndG\) is not finitely generated.
\end{corollary}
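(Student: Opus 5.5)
The plan is to reduce the statement to Theorem~\ref{thm:infiltraciones}. By that theorem, every generating set of $\EndG$ contains an orbital infiltration of each possible type. So it suffices to exhibit infinitely many pairwise distinct types $(H,[K]_{N_H})$ that are actually realized by some element of $\EndG$. We must also check that a single function cannot be an infiltration of two of the chosen types. Then any generating set is infinite.

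\textbf{Step 1: choosing the types.} By (i), $\ConjX$ is infinite. By (ii), after discarding $[G]$ and the finitely many exceptional classes, infinitely many classes $[H_1],[H_2],\dots$ remain. For each $[H_n]$ we fix a representative $H_n\in\StabG$ and a class $[K_n]\in\ConjX$ with $[H_n]<[K_n]$. Replacing $K_n$ by a conjugate, we may assume $H_n\le K_n$ with $K_n\in\StabG$. The type $(H_n,[K_n]_{N_{H_n}})$ is then well posed. Types with distinct first classes $[H_n]$ are distinct.

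\textbf{Step 2: realizing each type.} Pick $x\in\mathcal{B}_{H_n}$ and $y\in\mathcal{B}_{K_n}$. Since $G_x=H_n\le K_n=G_y$, Lemma~\ref{lema1} gives the elementary collapsing $[x\mapsto y]\in\EndG$. I will check the four conditions of the definition.
\begin{itemize}
\item Outside the orbit $Gx\subseteq\mathcal{B}_{[H_n]}$ the map is the identity. Hence conditions (1) and (2) hold on every box other than $[H_n]$ and $[K_n]$.
\item Condition (3) holds with the point $x$, since $[x\mapsto y](x)=y\in\mathcal{B}_{[K_n]}$.
\item For condition (4), the only points of $\mathcal{B}_{[H_n]}$ leaving the box are the $g\cdot x$, which map to $g\cdot y$. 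I must show $[G_{g\cdot y}]_{N_{H_n}}=[K_n]_{N_{H_n}}$. Here $G_{g\cdot y}=gK_ng^{-1}$, so this is only automatic when $g\in N_{H_n}$. I plan to read condition (4) the way the paper uses it in Theorem~\ref{thm:infiltraciones}, namely up to the $G$-action on points. This is the same convention under which the collapsing $[x\mapsto y]$ is declared to be "of type $(G_x,[G_y]_{N_{G_x}})$" earlier in the paper.
\end{itemize}
So each type is realized.

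\textbf{Step 3: distinct types need distinct generators.} An infiltration of type $(H_n,\cdot)$ sends some point of $\mathcal{B}_{[H_n]}$ into $\mathcal{B}_{[K_n]}$, so it does not preserve $\mathcal{B}_{[H_n]}$. For $m\neq n$, condition (1) for type $(H_m,\cdot)$ forces preservation of every box other than $[H_m]$ and $[K_m]$. A single function could therefore serve both types only if $[H_n]=[K_m]$ and $[H_m]=[K_n]$. This would give $[H_n]<[H_m]<[H_n]$, which is impossible.

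A general function could still serve several types along a chain, for instance when $[H_n]=[K_m]$ only. To avoid this, I would pass to a subfamily in which the sets $\{[H_n],[K_n]\}$ are pairwise disjoint. Each class meets only finitely many chosen pairs. Since the same $[K]$ could be reused infinitely often, I would instead use the simple counting bound: each function is an infiltration of at most two of our types, because it moves points out of at most two boxes. Infinitely many types then need infinitely many generators. The main obstacle is making this bookkeeping airtight together with the conjugacy issue in condition (4) from Step 2.
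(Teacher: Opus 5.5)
Your proposal is correct and follows the paper's route---Theorem~\ref{thm:infiltraciones} applied to infinitely many realized types $(H_n,[K_n]_{N_{H_n}})$ with pairwise distinct $[H_n]$---and it supplies two steps the paper's proof skips. The first is realizability via the elementary collapsing $[x\mapsto y]$, where checking condition (4) only on $\mathcal{B}_{H_n}$ is the reading the paper itself presupposes when it calls $[x\mapsto y]$ a collapsing of type $(G_x,[G_y]_{N_{G_x}})$; read over all of $\mathcal{B}_{[H_n]}$, equivariance would force $[K_n]_{N_{H_n}}=[K_n]$ for the type to be realizable at all. The second is the fact that no single map can serve infinitely many of the types.

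Your final counting bound already makes that bookkeeping airtight. An infiltration of type $n$ can move points out of only $\mathcal{B}_{[H_n]}$ and $\mathcal{B}_{[K_n]}$, while serving type $m$ requires moving points out of $\mathcal{B}_{[H_m]}$, and these boxes are distinct for distinct $m$. So the subfamily detour is unnecessary. Unlike your first argument, this bound also does not use antisymmetry of the order on $\ConjG$, which can fail for infinite $G$: in $\mathbb{Z}[1/6]\rtimes\mathbb{Z}$, with the generator acting by multiplication by $6$, one has $[\mathbb{Z}]<[\tfrac{1}{2}\mathbb{Z}]<[\mathbb{Z}]$.
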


\begin{proof}
By condition (ii), for all but finitely many classes \([H]\), there exists a strictly larger class \([K]\). For each pair \(([H],[K])\) with \(H<K\), there is at least one type of orbital infiltration \((H,[K]_{N_H})\). Since there are infinitely many pairs \(([H],[K])\) (as \(|\ConjX|=\infty\) and the chain is ascending), there are infinitely many distinct types. By Theorem \ref{thm:infiltraciones}, any generating set must contain a function for each type, hence it must be infinite. Therefore \(\EndG\) is not finitely generated.
\end{proof}

\begin{corollary}
Let $G$ be a group and $X$ a $G$-set such that:
\begin{enumerate}
\item[(i)] \(|\ConjX|=\infty\);
\item[(ii)] There exists \([H_0]\in\ConjX\) such that there are infinitely many \([K_\alpha]\in\ConjX\) with \([H_0]<[K_\alpha]\).
\end{enumerate}
Then \(\EndG\) is not finitely generated.
\end{corollary}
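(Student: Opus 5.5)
The plan is to reduce the statement to Theorem~\ref{thm:infiltraciones}, in the same way as the preceding corollary. The goal is to exhibit infinitely many pairwise distinct types of orbital infiltrations. Fix $H_0\in\StabG$ representing $[H_0]$, and for each $\alpha$ choose a representative $K_\alpha$ of $[K_\alpha]$ with $H_0\le K_\alpha$. This is possible because $[H_0]<[K_\alpha]$ means $H_0\le g^{-1}K_\alpha g$ for some $g$, and we may replace $K_\alpha$ by that conjugate, which still lies in $\StabG$ since $\StabG$ is closed under conjugation.

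\textbf{Step 1: existence.} For each $\alpha$ we construct an orbital infiltration of type $(H_0,[K_\alpha]_{N_{H_0}})$. Pick $x\in\mathcal{B}_{H_0}$ and $y\in\mathcal{B}_{K_\alpha}$. Since $G_x=H_0\le K_\alpha=G_y$, Lemma~\ref{lema1} gives that the elementary collapsing $[x\mapsto y]$ is a well-defined element of $\EndG$. We then check the four conditions of the definition.
\begin{enumerate}
\item[(1),(2)] $[x\mapsto y]$ is the identity outside the orbit $Gx\subseteq\mathcal{B}_{[H_0]}$, so every other box is preserved and the restriction to it is injective.
\item[(3)] We have $[x\mapsto y](x)=y\in\mathcal{B}_{[K_\alpha]}$.
\item[(4)] The only points of $\mathcal{B}_{[H_0]}$ sent out of the box are the $g\cdot x$. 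Their images $g\cdot y$ have stabilizer $gK_\alpha g^{-1}$, and we must check that its $N_{H_0}$-class equals $[K_\alpha]_{N_{H_0}}$.
\end{enumerate}

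\textbf{Step 2: distinctness.} If $[K_\alpha]\neq[K_\beta]$, then $[K_\alpha]_{N_{H_0}}\neq[K_\beta]_{N_{H_0}}$, because each $N$-conjugacy class is contained in the corresponding full conjugacy class. So the types $(H_0,[K_\alpha]_{N_{H_0}})$ are pairwise distinct, and there are infinitely many of them. Theorem~\ref{thm:infiltraciones} then says every generating set of $\EndG$ contains at least one element of each type. An orbital infiltration of type $(H_0,[K_\alpha]_{N_{H_0}})$ sends some point of $\mathcal{B}_{[H_0]}$ into $\mathcal{B}_{[K_\alpha]}$, and by condition (4) every point of $\mathcal{B}_{[H_0]}$ leaving that box lands in the class $[K_\alpha]$. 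Hence a single function cannot be of two different types, and a generating set must be infinite. Hypothesis (i) is used only to note that infinitely many classes exist; hypothesis (ii) already gives this.

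\textbf{Main obstacle.} The expected difficulty is condition (4) in Step 1. The image $g\cdot y$ has stabilizer $gK_\alpha g^{-1}$ with $g$ ranging over all of $G$, not only over $N_{H_0}$. However, condition (4) is applied only to points of the box that actually leave it, namely the $g\cdot x$, and these lie in $\mathcal{B}_{gH_0g^{-1}}$, not in $\mathcal{B}_{H_0}$. The first approach is to read (4) relative to the stabilizer of the source point: for a source point with stabilizer $H_0$, i.e.\ $g\in N_{H_0}$, the image has stabilizer in $[K_\alpha]_{N_{H_0}}$, and the other points are handled by conjugating. If a stricter reading is required, the fallback is to note that distinct $[K_\alpha]$ still yield distinct types under any reasonable interpretation. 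That suffices for the counting argument, as long as Theorem~\ref{thm:infiltraciones} is applied to the collapsings $[x\mapsto y]$, which are the canonical infiltrations used in \cite{paper}.
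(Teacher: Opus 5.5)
Your proposal is correct and follows the same route as the paper. The paper's proof simply notes that the pairs $([H_0],[K_\alpha])$ give infinitely many types $(H_0,[K_\alpha]_{N_{H_0}})$ and then invokes Theorem~\ref{thm:infiltraciones}. Your extra checks make explicit what the paper leaves implicit: that $[x\mapsto y]$ realizes each type once condition (4) is read over points with stabilizer exactly $H_0$, and that no single map can have two of these types. With those in place, the hedged ``fallback'' in your last paragraph is unnecessary.
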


\begin{proof}
The infinite collection of pairs \(([H_0],[K_\alpha])\) yields infinitely many types of orbital infiltrations of the form \((H_0, [K_\alpha]_{N_{H_0}})\). By Theorem \ref{thm:infiltraciones}, infinitely many generators are needed. Hence \(\EndG\) is not finitely generated.
\end{proof}

\begin{lemma}\label{lem:tipos}
Let \(K_1, K_2 \leq H \leq G\) be subgroups of \(G\) such that \(K_1, K_2 \subsetneq H\). The orbital infiltrations of types \((K_1, [H]_{N_{K_1}})\) and \((K_2, [H]_{N_{K_2}})\) are of the same type if and only if there exists \(n \in N_G(H)\) such that
\[
K_2 = n^{-1} K_1 n.
\]
\end{lemma}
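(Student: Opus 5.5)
The first task is to pin down what ``same type'' means, since the paper leaves it implicit. I will read the type $(K,[H]_{N_K})$ as the equivalence class of the pair $(K,[H]_{N_K})$ under simultaneous conjugation by $G$. That is, $(K_1,[H_1]_{N_{K_1}})$ and $(K_2,[H_2]_{N_{K_2}})$ are the same type exactly when some $g\in G$ satisfies $K_2=g^{-1}K_1g$ and $[H_2]_{N_{K_2}}=g^{-1}[H_1]_{N_{K_1}}g$. This is the reading forced by the definition of orbital infiltration, because conditions (1)--(4) refer only to the boxes and to the $N_H$-conjugacy class of the target stabilizers, and all of these transform covariantly under conjugation. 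Two preliminary facts are needed. First, $N_{g^{-1}Kg}=g^{-1}N_Kg$. Second, as a consequence, $g^{-1}[H]_{N_K}g=[g^{-1}Hg]_{N_{g^{-1}Kg}}$. Both are direct computations.

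\emph{($\Leftarrow$).} Suppose $n\in N_G(H)$ and $K_2=n^{-1}K_1n$. Conjugating by $g=n$ sends $K_1$ to $K_2$. It sends $[H]_{N_{K_1}}$ to $[n^{-1}Hn]_{N_{K_2}}=[H]_{N_{K_2}}$, since $n$ normalizes $H$. So the two pairs are conjugate and the types coincide. Concretely, if $\tau$ is an infiltration of the first type, I will check that the same $\tau$ also witnesses the second type. The boxes $\mathcal B_{[K_1]}=\mathcal B_{[K_2]}$ and $\mathcal B_{[H]}$ are unchanged. The point $x$ in condition (3) can be replaced by $n^{-1}\cdot x$, whose stabilizer is $K_2$, and condition (4) transforms correctly under this replacement.

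\emph{($\Rightarrow$).} Suppose the types agree, so there is $g\in G$ with $K_2=g^{-1}K_1g$ and $[H]_{N_{K_2}}=g^{-1}[H]_{N_{K_1}}g=[g^{-1}Hg]_{N_{K_2}}$. The second equality gives $m\in N_{K_2}$ with $g^{-1}Hg=m^{-1}Hm$. Set $n:=gm^{-1}$. Then $n^{-1}Hn=mg^{-1}Hgm^{-1}=H$, so $n\in N_G(H)$. Also $n^{-1}K_1n=m\,g^{-1}K_1g\,m^{-1}=mK_2m^{-1}=K_2$, because $m$ normalizes $K_2$. This is exactly the required $n$.

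The main obstacle is justifying that ``same type'' really means conjugacy of the pairs. A stricter literal reading, equality of the pairs, would make the statement false for $n\notin N_{K_1}$. I will state the convention explicitly at the start of the proof, noting that infiltration types are only meaningful up to relabelling by $G$. I will also verify in the ($\Leftarrow$) direction that the definition of orbital infiltration is invariant under the replacement $x\mapsto n^{-1}x$, which I expect to be routine. The hypothesis $K_i\subsetneq H$ is used only to ensure that these are genuine infiltration types.
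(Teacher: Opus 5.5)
Your proof is correct. Its skeleton is the same as the paper's: for ($\Leftarrow$) you conjugate by $n$, and for ($\Rightarrow$) you extract an element of $N_{K_2}$ from the equality of $N_K$-classes and use it to turn the conjugator $g$ into an element of $N_G(H)$. Where you differ is the convention for ``same type'', and there your version is the one that makes the lemma true.

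The paper says two types are equal iff the pairs are literally equal. In ($\Leftarrow$) it then declares the pairs ``identical'', which is impossible when $K_1\neq K_2$, using the claim $n^{-1}[H]_{N_{K_1}}n=[H]_{N_{K_1}}$. In ($\Rightarrow$) it assumes $K_2=g^{-1}K_1g$ together with a literal equality $[H]_{N_{K_1}}=[H]_{N_{K_2}}$.

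That claim fails in general. In $G=S_4$, let $H$ be the Sylow $2$-subgroup containing $(1\,2\,3\,4)$, and set $K_1=\langle(1\,2)(3\,4)\rangle$, $K_2=\langle(1\,4)(2\,3)\rangle$ and $n=(1\,2\,3\,4)\in H=N_G(H)$, so $n^{-1}K_1n=K_2$. Here $N_{K_1}$ and $N_{K_2}$ are the other two Sylow $2$-subgroups $P_1$ and $P_2$. Then $[H]_{N_{K_1}}=\{H,P_2\}\neq\{H,P_1\}=[H]_{N_{K_2}}$, whereas your identity $[H]_{N_{K_2}}=n^{-1}[H]_{N_{K_1}}n$ does hold. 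So under the paper's reading the ``if'' direction would be false. Your simultaneous-conjugacy convention, together with $g^{-1}[H]_{N_K}g=[g^{-1}Hg]_{N_{g^{-1}Kg}}$, is exactly what is needed.

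Your ($\Rightarrow$) step is also sound where the paper's is not. The paper's final check that $n^{-1}K_1n=K_2$ rewrites $gK_1g^{-1}$ as $K_2$, although $K_2=g^{-1}K_1g$. It also uses $n_1\in N_{K_1}$ to normalize $K_2$. Your choice $n=gm^{-1}$, with $m\in N_{K_2}$ and $g^{-1}Hg=m^{-1}Hm$, verifies both $n\in N_G(H)$ and $n^{-1}K_1n=K_2$ directly.

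Finally, your remark that a single $\tau$ witnesses both types is right. Condition (4) is imposed at every point of the $G$-invariant box, so applying the first-type condition at $n\cdot y$ gives the second-type condition at $y$.
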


\begin{proof}
By definition, the type of an orbital infiltration is determined by the pair \((K, [H]_{N_K})\), where \(K\) is the stabilizer subgroup of the source box and \([H]_{N_K}\) is the \(N_K\)-conjugacy class of \(H\). Two types are equal if and only if their corresponding pairs are equal.

Suppose first that there exists \(n \in N_G(H)\) such that \(K_2 = n^{-1} K_1 n\). Then, since \(n \in N_G(H)\), we have \(n^{-1} H n = H\). Moreover, \(N_{K_2} = n^{-1} N_{K_1} n\). Hence,
\[
[H]_{N_{K_2}} = \{ n_2^{-1} H n_2 : n_2 \in N_{K_2} \}
= \{ n^{-1} n_1^{-1} H n_1 n : n_1 \in N_{K_1} \}
= n^{-1} [H]_{N_{K_1}} n = [H]_{N_{K_1}},
\]
where the last equality follows because \(n\) normalizes \(H\), so conjugation by \(n\) leaves the class \([H]_{N_{K_1}}\) invariant. Thus the pairs \((K_1, [H]_{N_{K_1}})\) and \((K_2, [H]_{N_{K_2}})\) are identical, so the types coincide.

Conversely, suppose the types are equal. Then \(K_1 = K_2\) or there exists \(g \in G\) such that \(K_2 = g^{-1} K_1 g\) and the classes \([H]_{N_{K_1}}\) and \([H]_{N_{K_2}}\) coincide. From the equality of classes, in particular for \(n_1 = e\) (the identity), we have \(H \in [H]_{N_{K_1}}\). Hence \(H \in [H]_{N_{K_2}}\), meaning there exists \(n_2 \in N_{K_2}\) such that \(n_2^{-1} H n_2 = H\), i.e., \(n_2 \in N_G(H)\). Now, since \(K_2 = g^{-1} K_1 g\), the conjugator \(g\) may not normalize \(H\). But the equality of classes also implies that \(g^{-1} N_{K_1} g\) and \(N_{K_2}\) have the same \(N_H\)-class; in particular, there exists \(n_1 \in N_{K_1}\) such that \(g^{-1} n_1^{-1} H n_1 g = n_2^{-1} H n_2\). From this it follows that \(g^{-1} n_1^{-1} n_2 \in N_G(H)\). Let \(n := g^{-1} n_1^{-1} n_2\). Then \(n \in N_G(H)\) and moreover
\[
n^{-1} K_1 n = n_2^{-1} n_1 g K_1 g^{-1} n_1^{-1} n_2 = n_2^{-1} n_1 K_2 n_1^{-1} n_2 = K_2,
\]
since \(n_1 \in N_{K_1}\) and \(n_2 \in N_{K_2}\). Therefore, \(K_2 = n^{-1} K_1 n\) with \(n \in N_G(H)\), as required.
\end{proof}

\begin{definition}
For a subgroup \(H \leq G\), define
\[
\varepsilon_{N_H}([H]) := \left| \left\{ [K]_{N_H} : K = g^{-1}Hg \text{ for some } g \in G,\ K \subsetneq H \right\} \right|.
\]
That is, \(\varepsilon_{N_H}([H])\) is the number of \(N_H\)-conjugacy classes of conjugates of \(H\) that are strictly contained in \(H\).
\end{definition}

Note that if \(H\) is finite, then \(\varepsilon_{N_H}([H]) = 0\), since no proper subgroup of \(H\) can have the same cardinality as \(H\). Moreover, if \(H\) has finite index in \(G\), then \(H\) has only finitely many conjugates in total; therefore, \(\varepsilon_{N_H}([H])\) is finite.

\begin{corollary}
Let \(G\) be a group and \(X\) a \(G\)-set. If there exists \([H] \in \ConjX\) such that \(\varepsilon_{N_H}([H]) = \infty\), then \(\EndG\) is not finitely generated.
\end{corollary}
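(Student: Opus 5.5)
The plan is to reduce the statement to Theorem \ref{thm:infiltraciones}: I will exhibit infinitely many pairwise distinct types of orbital infiltrations. Then every generating set of $\EndG$ contains infinitely many distinct elements, one for each type, because a single function has only one type. The natural source of types is the hypothesis $\varepsilon_{N_H}([H])=\infty$. It supplies infinitely many conjugates $K_\alpha=g_\alpha^{-1}Hg_\alpha\subsetneq H$ that lie in pairwise distinct $N_H$-conjugacy classes. Each such $K_\alpha$ is a stabilizer in $X$, since $[H]\in\ConjX$ and $\StabG$ is closed under conjugation. So $K_\alpha\leq H$ is a strict containment of two subgroups in $\StabG$.

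First, for each $\alpha$ I will construct an orbital infiltration of type $(K_\alpha,[H]_{N_{K_\alpha}})$. Pick $x_\alpha\in X$ with $G_{x_\alpha}=K_\alpha$ and $y\in X$ with $G_y=H$. By Lemma \ref{lema1}(i) the elementary collapsing $[x_\alpha\mapsto y]$ is well defined and $G$-equivariant. I will check the four conditions of the definition. It is the identity off the orbit $Gx_\alpha$, which gives conditions (1) and (2). It sends $x_\alpha$ to $y$, and $y$ lies in the box of $H$, which gives (3). Every point of the source box that is moved goes to some $g\cdot y$ with stabilizer $g^{-1}Hg$. To get condition (4) I may have to restrict attention to the images coming from $x_\alpha$ itself, or use the fact that $g$ stabilizes $K_\alpha$ up to the relevant normalizer.

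A subtlety is that here the source and target boxes coincide, because $K_\alpha$ and $H$ are conjugate, so $[K_\alpha]=[H]$. I will therefore read the definition with $[H]=[K]$. In that reading the map is non-injective inside a single box, and the type records the pair $(K_\alpha,[H]_{N_{K_\alpha}})$. This is precisely the situation covered by Lemma \ref{lem:tipos}.

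Second, I will show the types are pairwise distinct using Lemma \ref{lem:tipos}. If the types for $K_\alpha$ and $K_\beta$ coincide, then $K_\beta=n^{-1}K_\alpha n$ for some $n\in N_G(H)=N_H$. That means $[K_\alpha]_{N_H}=[K_\beta]_{N_H}$, so $\alpha=\beta$ by the choice of the family. Hence there are infinitely many types, and Theorem \ref{thm:infiltraciones} finishes the proof.

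The main obstacle is the first step. It requires verifying condition (4) of the definition and making sure the degenerate case $[K]=[H]$ genuinely counts as a ``type'' to which Theorem \ref{thm:infiltraciones} applies. If the collapsing map fails (4), I would replace it with a map that collapses a single orbit $Gx_\alpha$ onto $Gy$ and is the identity elsewhere. I would then argue directly, in the spirit of the proof of Theorem \ref{thm:infiltraciones}, that any factorization of it must contain a non-bijective factor that already collapses an orbit with stabilizer $N_H$-conjugate to $K_\alpha$ onto one with stabilizer $H$. Only finitely many $\alpha$ can be handled by each generator, since a generator has only finitely many ``first collapse'' patterns relevant here. I expect this to need care, as the stabilizers are infinite and the orbits need not be finite.
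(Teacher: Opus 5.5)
Your plan is the paper's own argument: Lemma~\ref{lem:tipos} makes the types $(K_\alpha,[H]_{N_{K_\alpha}})$ pairwise distinct, and Theorem~\ref{thm:infiltraciones} finishes. You also located the weak point correctly. But the gap there is genuine and cannot be closed.

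The step that fails is ``a single function has only one type''. Since $[K_\alpha]=[H]$, the source and target boxes coincide. Conditions (1)--(2) then constrain only the other boxes. Condition (3) holds for any map sending some point of $\mathcal{B}_{[H]}$ into $\mathcal{B}_{[H]}$. Condition (4) is vacuous for box-preserving maps. So every box-preserving $\tau$ that is injective off $\mathcal{B}_{[H]}$, the identity included, is an orbital infiltration of \emph{all} the types $(K_\alpha,[H]_{N_{K_\alpha}})$ simultaneously. Theorem~\ref{thm:infiltraciones} therefore gives no lower bound on the size of a generating set. Condition (4) is not the real obstacle; in the degenerate case the definition simply stops distinguishing anything. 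The paper's proof consists of exactly these two citations and has the same defect.

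In fact the statement is false, so neither your main line nor your fallback can be repaired. The counterexample:
\begin{itemize}
\item[(a)] \textbf{Setup.} Take $G=F(a,b)$, $a_n:=b^{-n}ab^{n}$, $H:=\langle a_n: n\ge 0\rangle$ and $X:=G/H$. The normal closure $N$ of $a$ is free on $\{a_n\}_{n\in\mathbb{Z}}$, and $H$ is a free factor of $N$.
\item[(b)] \textbf{Which $c$ satisfy $c^{-1}Hc\le H$.} Write $c=b^{j}m$ with $m\in N$. Then $c^{-1}ac=m^{-1}a_jm$ must lie in $H$. This forces $j\ge 0$ (retract $N$ onto $\langle a_n:n<0\rangle$). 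It also forces $m\in H$, because free factors are malnormal. Hence $cH=b^jH$ with $j\ge 0$.
\item[(c)] \textbf{The monoid is monogenic.} By Lemma~\ref{lema1}, $\EndG=\{\phi_j\}_{j\ge 0}$ with $\phi_j(gH)=gb^{j}H$ and $\phi_i\phi_j=\phi_{i+j}$. Thus $\EndG\cong(\mathbb{N},+)$, generated by $\phi_1$.
\item[(d)] \textbf{The hypothesis holds.} The same computation gives $N_G(H)=H$. The conjugates of $H$ properly inside $H$ are the $H$-conjugates of $K_j:=b^{-j}Hb^{j}=\langle a_n:n\ge j\rangle$, $j\ge 1$. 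These lie in pairwise distinct $N_H$-classes, since their images in $H^{ab}\cong\bigoplus_{n\ge 0}\mathbb{Z}$ differ. So $\varepsilon_{N_H}([H])=\infty$.
\end{itemize}

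Your own construction shows why the fallback fails. The collapsing $[x_j\mapsto y]$ with $x_j=b^{-j}H$ and $y=H$ is exactly $\phi_j=\phi_1^{\,j}$. Inside one box, collapses compose and their patterns accumulate: $K_1\to H$ iterated $j$ times gives $K_j\to H$. So no first non-bijective factor carries the type $(K_j,[H]_{N_{K_j}})$; $\phi_1$ sends the unique point with stabilizer $K_j$ to one with stabilizer $K_{j-1}$. A single generator thus handles every $\alpha$, which is precisely what your ``only finitely many $\alpha$ per generator'' claim would have to rule out.
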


\begin{proof}
If \(\varepsilon_{N_H}([H]) = \infty\), then there are infinitely many \(N_H\)-conjugacy classes of subgroups \(K_\alpha < H\) that are conjugates of \(H\). By Lemma \ref{lem:tipos}, each of these classes gives rise to a distinct type of orbital infiltration \((K_\alpha, [H]_{N_{K_\alpha}})\). By Theorem \ref{thm:infiltraciones}, every generating set of \(\EndG\) must contain at least one infiltration of each of these infinitely many types. Therefore, \(\EndG\) is not finitely generated.
\end{proof}

In this work we have established sharp bounds for the rank of the group of units of the monoid of \(G\)-equivariant functions on a finite \(G\)-set, and have determined necessary conditions for the whole monoid to be finitely generated. The explicit construction of generating sets, together with the algorithm for grouping base generators, provides practical tools to improve these bounds and reduces the gap between the upper and lower bounds. Nevertheless, the question of precisely characterizing the parameter \(\lambda_G^{\min}(X)\) in terms of invariants of the action remains open; an explicit description of this number would allow closing the inequality and obtaining the exact rank of the group of units. \\

The study of monoids of \(G\)-equivariant functions is still an open field, since many of the structural properties of these objects have not yet been fully determined. This work takes a further step in this direction, consolidating existing results and laying a solid foundation for future research on finite generation, classification of \(G\)-sets according to their generating properties, and the search for explicit expressions for the combinatorial invariants governing the structure of these monoids.

\section*{Acknowledgments}

\section*{Declarations}

\indent \begin{bf}\hspace{0.25in}Ethical Approval\end{bf}\\
Not applicable.

\begin{bf}Competing interests\end{bf}\\
The authors declare that there are no known competing financial interests or personal relationships that could have appeared to influence the work reported in this paper.

\begin{bf}Authors' contributions\end{bf}\\
All authors contributed equally to this work

\begin{bf}Funding\end{bf}\\
This research has been supported by the Center for Mathematical Sciences, UNAM Campus Morelia, which has provided the necessary academic environment for the author's postdoctoral stay. Additionally, the research has been funded by SECIHTI, Posdoctoral fellowship No. I1200/111/2024, whose support has been essential for carrying out these research activities. \\

\begin{bf}Availability of data and materials\end{bf}\\
Not applicable.

\end{document}